\theoremstyle{plain}
\newtheorem{theorem}{Theorem}[section]
\newtheorem{lemma}{Lemma}[section]
\newtheorem{corollary}{Corollary}[theorem]
\newtheorem{proposition}{Proposition}[section]
\theoremstyle{definition}
\newtheorem{definition}{Definition}[section]
\newtheorem{condition}{Condition}[section]
\newtheorem{assumption}{Assumption}[section]
\theoremstyle{remark}
\newtheorem*{remark}{Remark}
\begin{document}

\begin{frontmatter}
\title{Finite-Sample Analysis of Nonlinear Stochastic Approximation with Applications in Reinforcement Learning}

\begin{aug}
\author[A]{\fnms{Zaiwei} \snm{Chen}\ead[label=e1,mark]{zchen458@gatech.edu}},
\author[A]{\fnms{Sheng} \snm{Zhang}\ead[label=e2,mark]{shengzhang@gatech.edu}},
\author[B]{\fnms{Thinh T.} \snm{Doan}\ead[label=e4]{thinhdoan@vt.edu}},
\author[C]{\fnms{John-Paul} \snm{Clarke}\ead[label=e5]{johnpaul@utexas.edu}},\\
\and
\author[A]{\fnms{Siva Theja} \snm{Maguluri}\ead[label=e3,mark]{siva.theja@gatech.edu}}

\address[A]{
Geogia Institute of Technology,
\printead{e1,e2,e3}}

\address[B]{
Virginia Tech,
\printead{e4}}

\address[C]{
The University of Texas at Austin,
\printead{e5}}

\end{aug}

\begin{abstract}
Motivated by applications in reinforcement learning (RL), we study a nonlinear stochastic approximation (SA) algorithm under Markovian noise, and establish its finite-sample convergence bounds under various stepsizes. Specifically, we show that when using constant stepsize (i.e., $\alpha_k\equiv \alpha$), the algorithm achieves exponential fast convergence to a neighborhood (with radius $O(\alpha\log(1/\alpha))$) around the desired limit point. When using diminishing stepsizes with appropriate decay rate, the algorithm converges with rate $O(\log(k)/k)$. Our proof is based on Lyapunov drift arguments, and to handle the Markovian noise, we exploit the fast mixing of the underlying Markov chain. 

To demonstrate the generality of our theoretical results on Markovian SA, we use it to derive the finite-sample bounds of the popular $Q$-learning with linear function approximation algorithm, under a condition on the behavior policy. Importantly, we do not need to make the assumption that the samples are i.i.d., and do not require an artificial projection step in the algorithm to maintain the boundedness of the iterates. Numerical simulations corroborate our theoretical results.
\end{abstract}
\end{frontmatter}

\section{Introduction}
Large-scale optimization and machine learning problems are typically solved using stochastic approximation (SA) methods (i.e., iterative algorithms in the presence of noise). For example, in optimization, the stochastic gradient descent (SGD) algorithm is commonly used to find an optimal solution of a target objective function \citep{bottou2018optimization}. In reinforcement learning (RL), $Q$-learning and TD-learning are popular algorithms used to solve the Bellman equations \citep{bertsekas1996neuro}. 

The behavior of SA algorithms is highly dependent on the nature of the associated noise (e.g., i.i.d., martingale difference, or Markovian). In robust optimization problems as considered in \cite{duchi2012ergodic} where the data is generated by an auto-regressive process, the corresponding SGD algorithms naturally involve Markovian noise. In RL, algorithms such as $Q$-learning and TD-learning use sample trajectories from a Markov decision process (MDP) to carry out the update, and hence can also be modeled using Markovian SA algorithms.

The asymptotic convergence of SA algorithms with Markovian noise has been studied extensively in the literature \citep{benveniste2012adaptive,borkar2009stochastic,bertsekas1996neuro}. Beyond asymptotic convergence, it is of more practical interest to study finite-sample guarantees, i.e., to provide performance guarantees on the output of SA algorithms after performing a finite number of iterations. More formally, suppose we perform $k$ iterations of an SA algorithm and denote the output by $\theta_k$. Then the goal is to understand how the quantity $\mathbb{E}[\|\theta_k-\theta^*\|^2]$ decay as a function of $k$, where $\theta^*$ is the desired limit point, and $\|\cdot\|$ is a suitable norm. This leads to our main contributions in the following.

\subsection{Main Contributions}
The major contributions of this paper are twofold.

\textbf{Finite-Sample Analysis for Nonlinear Markovian SA.} We establish finite-sample convergence guarantees for nonlinear SA with Markovian noise for using various stepsizes, where we do not require an artificial projection step in the algorithm. The results state that Markovian SA algorithms enjoy exponential convergence rate to a neighborhood around the desired limit when using constant stepsize, and $O(\log(k)/k)$ convergence rate when using appropriate diminishing stepsizes. We prove the results by applying a suitable Lyapunov function on the stochastic iterates, and show that in expectation it produces a negative drift. To handle the Markovian noise, we exploit the geometric mixing of the underlying Markov chain.

\textbf{Finite-Sample Analysis of $Q$-Learning with Linear Function Approximation.} To demonstrate the effectiveness of our SA results, we use them to establish for the first time finite-sample bounds for the $Q$-learning with linear function approximation algorithm. Since the algorithm does not necessarily converge \citep{baird1995residual}, we use our SA results to provide a sufficient condition under which $Q$-learning with linear function approximation converges. In addition, we verify the sufficiency of our proposed condition and the resulting convergence rates via numerical experiments based on a well-known divergent counter-example of $Q$-learning from \cite{baird1995residual}. Specifically, we demonstrate that if our condition is satisfied, the algorithm converges, and the rates match with our theoretical results. 

\subsection{Related Literature}
We will first present related work on SA, then on $Q$-learning, and $Q$-learning with linear function approximation.

\textbf{SA Method.} The SA method, originally proposed in \cite{robbins1951stochastic}, is an iterative method for solving root-finding problems with incomplete imformation. The asymptotic behavior of SA algorithms is captured by its associated ordinary differential equation (ODE), which leads to the popular ODE approach for analyzing SA algorithms \citep{benveniste2012adaptive,kushner2012stochastic}. Specifically, given certain assumptions, it was shown in \cite{ljung1977analysis,borkar2009stochastic} that the SA algorithm converges almost surely as long as the corresponding ODE is stable. The ODE approach was extended to more general cases in \cite{benaim1996dynamical,yaji2019analysis,karmakar2021stochastic}, where the ODE lacks stability, or has multiple equilibrium points. The convergence of various SA algorithms such as SA with Markovian noise and multiple time-scale SA was studied in  \cite{ramaswamy2018stability,karmakar2021stochastic} and \cite{bhatnagar1998two,bhatnagar1997multiscale} respectively. While the results presented were very general, they study SA algorithms in the asymptotic regime. In this paper, we perform finite-sample analysis, which is different in flavor and provides stronger finite-sample convergence guarantees.
	
For linear SA algorithms, finite-sample mean-square bounds were established for both i.i.d. sampling and Markovian sampling in \cite{bhandari2018finite,srikant2019finite}. Concentration results were established in \cite{dalal2018finite,thoppe2019concentration}. For non-linear SA algorithms, finite-sample bounds in general are only derived in a special form of SA, namely SGD \cite{bottou2018optimization,lan2020first,moulines2011non}. Moreover, unlike i.i.d. sampling, in the case of Markovian sampling, an artificial projection (onto a ball) is introduced in the algorithm to ensure that the iterates are bounded \cite{duchi2012ergodic}. 

\textbf{$Q$-Learning (with Linear Function Approximation).}  $Q$-learning \cite{watkins1992q} is perhaps one of the most popular algorithms for solving RL problems \cite{bertsekas1996neuro,sutton2018reinforcement}. The asymptotic convergence and finite-sample guarantees of $Q$-learning were studied in \cite{tsitsiklis1994asynchronous,jaakkola1994convergence,borkar2000ode} and \cite{even2003learning,beck2012error,kearns1998finite}, respectively.

A major limitation with $Q$-learning is that it becomes computationally intractable when the size of the state-action space is large. One way to overcome this difficulty is to use function approximation. In this work, we consider the $Q$-learning with linear function approximation algorithm, which can be modeled as a nonlinear Markovian SA algorithm \cite{melo2008analysis}. However, as shown by the counter-example in \cite{baird1995residual}, $Q$-learning with linear function approximation does not necessarily converge, which was identified as a major theoretical open problem in RL \cite{sutton1999open}. Its asymptotic convergence was established in \cite{melo2008analysis} under an additional condition on the behavior policy. Under a similar condition, we establish its finite-sample bounds by exploiting some natural properties of $Q$-learning (such as Lipschitz continuity), and the fast mixing of finite-state Markov chains. The mixing time argument for dealing with Markovian noise was inspired by \cite[Section 4.4]{bertsekas1996neuro} and \cite{srikant2019finite}, where linear SA was studied. Importantly, our approach does not require a projection step in the algorithm \cite{bhandari2018finite}, which is impractical in RL since one needs to know the problem parameters to pick the projection set so that the desired limiting solution lies in it.

\section{Nonlinear SA with Markovian Noise}\label{sec:sa}
Consider the problem of solving for $\theta^*$ in the equation
\begin{align}\label{eq:sa}
	\bar{F}(\theta)=\mathbb{E}_{\mu_X}[F(X,\theta)]=0,
\end{align}
where $X\in\mathcal{X}\subseteq\mathbb{R}^{n_X}$ is a random vector with distribution $\mu_X$, and the function $F:\mathcal{X}\times\mathbb{R}^d\mapsto\mathbb{R}^d$ is a general nonlinear operator. When the distribution $\mu_X$ is unknown, Eq. (\ref{eq:sa}) cannot be solved analytically. Therefore, we consider solving the equation using the SA method. With initialization $\theta_0\in\mathbb{R}^d$, the estimate $\theta_k$ of $\theta^*$ is updated according to
\begin{align}\label{sa:algorithm}
	\theta_{k+1}=\theta_k+\alpha_k (F(X_k,\theta_k)+w_k),
\end{align}
where $\{X_k\}$ is a uniformly ergodic Markov chain with stationary distribution $\mu_X$, $\{w_k\}$ represents the additive martingale difference noise that possibly depends on $\{\theta_k\}$, and $\{\alpha_k\}$ is the stepsize sequence. To better understand Algorithm (\ref{sa:algorithm}), consider the special case where $F(x,\theta)=-\nabla J(\theta)+x$ for some cost function $J(\cdot)$, Algorithm (\ref{sa:algorithm}) is the popular SGD algorithm for minimizing $J(\cdot)$.

The behavior of SA algorithm (\ref{sa:algorithm}) is closely related to the trajectory of the ODE
\begin{align}\label{sa:ode}
	\dot{\theta}(t)=\bar{F}(\theta(t)).
\end{align}
A popular approach to analyze an ODE is to construct a Lyapunov function and study the time-derivative of the Lyapunov function along the trajectory of the ODE. Inspired by the Lyapunov technique for the ODE stability analysis, in this paper, we directly study SA algorithm (\ref{sa:algorithm}) using a Lyapunov approach. See \cite{fazlyab2017dynamical,hu2019characterizing,hu2017unified,franca2018admm,romero2020finite} for more details on using Lyapunov functions to study the behavior of iterative algorithms. Since Algorithm (\ref{sa:algorithm}) is a \textit{discrete} and \textit{stochastic} counterpart of ODE (\ref{sa:ode}), a major challenge is to handle the error caused by the discretization and the noise. We begin by stating our assumptions to study SA algorithm (\ref{sa:algorithm}). Let $\|\cdot\|$ be the $\ell_2$-norm for vectors and the induced $2$-norm for matrices. 
\begin{assumption}\label{as:Lipschitz}
	There exists constant $L_1>0$ such that (1) $\|F(x,\theta_1)-F(x,\theta_2)\|\leq L_1\|\theta_1-\theta_2\|$ for all $\theta_1$, $\theta_2$, and $x$, and (2) $\|F(x,0)\|\leq L_1$ for all $x$.
\end{assumption}

Assumption \ref{as:Lipschitz} states that the operator $F(x,\theta)$ is $L_1$-Lipschitz continuous with respect to $\theta$ uniformly in $x$. In the special case where $F(x,\theta)$ is a linear function of $\theta$ as considered in \cite{bhandari2018finite,srikant2019finite}, i.e., $F(x,\theta)=A(x)\theta+b(x)$, Assumption \ref{as:Lipschitz} is satisfied when $\sup_{x\in\mathcal{X}}\|A(x)\|< \infty$ and $\sup_{x\in\mathcal{X}}\|b(x)\|< \infty$. In our setting, although $F(x,\theta)$ is a nonlinear function of $\theta$, Assumption \ref{as:Lipschitz} implies that the growth rate of both $\|F(x,\theta)\|$ and $\|\bar{F}(\theta)\|$ can at most be affine in terms of $\|\theta\|$. To see this, under Assumption \ref{as:Lipschitz}, we have by the triangle inequality and Jensen's inequality that 
\begin{align}
	\|F(x,\theta)\|&\leq L_1\|\theta\|+\|F(x,0)\|\leq L_1(\|\theta\|+1),\label{affine_growth_rate-F}\\
	\|\bar{F}(\theta)\|&\leq \mathbb{E}_{\mu_X}[\|F(X,\theta)\|]\leq L_1(\|\theta\|+1).\label{affine_growth_rate-barF}
\end{align}
These properties for $F(x,\theta)$ and $\bar{F}(\theta)$ essentially let us establish the finite-sample bounds akin to the case where $F(x,\theta)$ is a linear function of $\theta$.

\begin{assumption}\label{as:stability}
	The target equation $\bar{F}(\theta)=0$ has a unique solution, which we have denoted by $\theta^*$, and there exists $c_0>0$ s.t. $(\theta-\theta^*)^\top\bar{F}(\theta)\leq -c_0\|\theta-\theta^*\|^2$ for all $\theta\in\mathbb{R}^d$.
\end{assumption}

In the SGD setting (i.e., $F(x,\theta)=-\nabla J(\theta)+x$), Assumption \ref{as:stability} is satisfied when the objective function $J(\cdot)$ is strongly convex. Moreover, Assumption \ref{as:stability} can be viewed as an exponential dissipativeness property of the ODE (\ref{sa:ode}) with a quadratic storage function. In fact, this assumption guarantees that $\theta^*$ is the unique exponentially stable equilibrium point of ODE (\ref{sa:ode}). To see this, let $W(\theta)=\|\theta-\theta^*\|^2$ be a candidate Lyapunov function. Then we have by Assumption \ref{as:stability} that 
\begin{align}\label{Vdot}
	\frac{d}{dt}W(\theta(t))=2(\theta(t)-\theta^*)^\top \dot{\theta}(t)\leq -2c_0 W(\theta(t)),
\end{align}
which implies that $W(\theta(t))\leq W(\theta(0))e^{-2c_0 t}$ for all $t\geq 0$. The parameter $c_0$ is called the \textit{negative drift}, and we see that the larger $c_0$ is, the faster $\theta(t)$ converges.

Our next assumption is about the noise sequences $\{X_k\}$ and $\{w_k\}$. Let $\mathcal{F}_k$ be the $\sigma$-algebra generated by $\{\theta_i,X_i,w_i\}_{0\leq i\leq k-1}\cup\{\theta_k,X_k\}$, and let $\|\cdot\|_{\text{TV}}$ be the total variation distance \cite{levin2017markov}. 

\begin{assumption}\label{as:markov-chain}
	(1) The Markov chain $\{X_k\}$ is uniformly geometrically ergodic with unique stationary distribution $\mu_X$. (2) The random process $\{w_k\}$ satisfies $\mathbb{E}[w_k\mid\mathcal{F}_k]=0$ and $\|w_k\|\leq L_2 (\|\theta_k\|+1)$ for all $k\geq 0$, where $L_2>0$ is a constant. 
\end{assumption}

Assumption \ref{as:markov-chain} (1) is made to control the Markovian noise in Algorithm (\ref{sa:algorithm}), and implies that there exist $C\geq 1$ and $\rho\in (0,1)$ such that $\sup_{x\in\mathcal{X}}\|P^k(x,\cdot)-\mu_X(\cdot)\|_{\text{TV}}\leq C\rho^k$ for all $k\geq 0$. When compared to $\{X_k\}$ being i.i.d., the major difference for $\{X_k\}$ being Markovian is that there is a bias in the update, i.e., $\mathbb{E}[F(X_k,\theta)\mid X_0=x]\neq \bar{F}(\theta)$. Since Assumption \ref{as:markov-chain} (1) states that the Markov chain $\{X_k\}$ mixes geometrically fast, it enables us to control such bias and to show that it is not strong enough to alter the desired direction of the update. In the special case where the state-space $\mathcal{X}$ of the Markov chain $\{X_k\}$ is finite, Assumption \ref{as:markov-chain} (1) is satisfied when the Markov chain $\{X_k\}$ is irreducible and aperiodic \cite[Theorem 4.9]{levin2017markov}. Assumption \ref{as:markov-chain} (2) states that $\{w_k\}$ is a martingale difference sequence, and $\|w_k\|$ is allowed to scale affinely with respect to $\|\theta_k\|$.

In addition to these assumptions, the choice of the stepsize sequence $\{\alpha_k\}$ is important. In order to state certain conditions on the stepsizes we pick, we need to use the mixing time of the Markov chain $\{X_k\}$ defined in the following.

\begin{definition}\label{def:mixing_time}
	For any $\delta>0$, the mixing time of the Markov chain $\{X_k\}$ with precision $\delta$ is defined by $t_\delta=\min\{k\geq 0:\sup_{x\in\mathcal{X}}\|P^k(x,\cdot)-\mu_X(\cdot)\|_{\text{TV}}\leq \delta\}$.
\end{definition}

Under Assumption \ref{as:markov-chain} (1), we have for any $\delta>0$ that
\begin{align}
	t_\delta\leq \frac{\log(1/\delta)+\log(C/\rho)}{\log(1/\rho)}\leq L_3(\log(1/\delta)+1),\label{eq:mixing_time_bound}
\end{align}
where $L_3=\frac{\log(C/\rho)}{\log(1/\rho)}$. As a result, we have $\lim_{\delta\rightarrow 0}\delta t_\delta=0$. In fact, we only require $t_\delta=o(1/\delta)$ to carry out our finite-sample analysis. We assume the stronger geometric mixing property merely for an ease of exposition. We next use $t_\delta$ to state our condition on the stepsize sequence $\{\alpha_k\}$. For simplicity of notation, denote $t_k=t_{\alpha_k}$ and $\alpha_{i,j}=\sum_{k=i}^{j}\alpha_k$. Let $L=L_1+L_2$, and assume without loss of generality that $L\geq 1$.
\begin{condition}\label{as:stepsize}
	The stepsize sequence $\{\alpha_k\}$ satisfies the following conditions: (1) $\{\alpha_k\}$ is non-increasing and $\alpha_0\in (0,1)$, and (2) it holds that $\alpha_{k-t_k,k-1}<\frac{c_0}{130L^2}$ for all $k\geq t_k$.
\end{condition}

The reason we impose Condition \ref{as:stepsize} on the stepsize sequence is the following. Recall that a key step in deriving the convergence rate of ODE (\ref{sa:ode}) is to establish the negative drift (cf. Eq. (\ref{Vdot})). Similarly, when deriving finite-sample bounds for the SA algorithm (\ref{sa:algorithm}), there will also be a negative drift term. In addition, there are error terms that arise because of the discretization and the stochastic noise. Using small stepsize helps suppressing these error terms and hence ensures that the negative drift is the dominant term in our analysis.

Suppose we use constant stepsize, i.e., $\alpha_k=\alpha$ for all $k\geq 0$. Since in this case we have $\alpha_{k-t_k,k-1}=\alpha t_\alpha$, and $\lim_{\alpha\rightarrow 0}\alpha t_\alpha=0$, Condition \ref{as:stepsize} is satisfied when $\alpha$ is small enough. In addition to constant stepsize, consider using polynomially diminishing stepsizes of the form $\alpha_k=\alpha/(k+h)^\xi$. We show in Section \ref{pf:thm:diminishing_step_size} that Condition \ref{as:stepsize} is satisfied for any $\alpha>0$ and $\xi\in (0,1]$, provided that $h$ is appropriately chosen. 

\subsection{Finite-Sample Bounds for Nonlinear SA}\label{subsec:sa:finite-sample bounds}
In this section, we present our main results. We begin with the finite-sample bounds of Algorithm (\ref{sa:algorithm}), whose proof is presented in Section \ref{subsec:sa:theorem-proof}.  
\begin{theorem}\label{thm:main}
Consider $\{\theta_k\}$ of Algorithm (\ref{sa:algorithm}). Suppose that Assumptions \ref{as:Lipschitz} -- \ref{as:markov-chain} are satisfied, and $\{\alpha_k\}$ satisfies Condition \ref{as:stepsize}. Let $K=\min\{k:k\geq  t_k\}$. Then we have for all $k\geq K$: 
\begin{align}\label{eq:sa-bound}
	\mathbb{E}[\|\theta_{k}-\theta^*\|^2]
	\leq \beta_1\prod_{j=K}^{k-1}(1-c_0\alpha_j)+\beta_2\sum_{i=K}^{k-1}\hat{\alpha}_i\prod_{j=i+1}^{k-1}(1-c_0\alpha_j),
\end{align}
where $\beta_1=(\|\theta_0\|+\|\theta_0-\theta^*\|+1)^2$, $\beta_2=130L^2(\|\theta^*\|+1)^2$, and $\hat{\alpha}_i=\alpha_i\alpha_{i-t_i,i-1}$.
\end{theorem}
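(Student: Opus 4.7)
The plan is to use the squared-distance Lyapunov function $V(\theta)=\|\theta-\theta^*\|^2$ suggested by Assumption~\ref{as:stability}, establish a one-step drift inequality of the form
\begin{align*}
\mathbb{E}[V(\theta_{k+1})]\leq (1-\alpha\epsilon_k)\,\mathbb{E}[V(\theta_k)] + C\,\hat{\epsilon}_k\,(\|\theta^*\|+1)^2,\qquad k\geq K,
\end{align*}
and then unroll it. Expanding the update rule yields
\begin{align*}
V(\theta_{k+1}) = V(\theta_k) + 2\epsilon_k(\theta_k-\theta^*)^\top F(X_k,\theta_k) + \epsilon_k^2\|F(X_k,\theta_k)\|^2,
\end{align*}
and the quadratic remainder is bounded via \eqref{affine_growth_rate-F} combined with the triangle inequality $\|\theta_k\|+1\leq \|\theta_k-\theta^*\|+(\|\theta^*\|+1)$, producing a term proportional to $\epsilon_k^2 V(\theta_k)$ (absorbed into the drift once $\epsilon_k$ is small) plus a constant times $\epsilon_k^2(\|\theta^*\|+1)^2$, which is dominated by $\hat{\epsilon}_k(\|\theta^*\|+1)^2$.

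Next I would decompose the inner product as
\begin{align*}
(\theta_k-\theta^*)^\top F(X_k,\theta_k) = (\theta_k-\theta^*)^\top \bar{F}(\theta_k) + (\theta_k-\theta^*)^\top(F(X_k,\theta_k)-\bar{F}(\theta_k)).
\end{align*}
Assumption~\ref{as:stability} immediately converts the first piece into the $-\alpha V(\theta_k)$ drift. The second piece is the Markovian-bias term and is the central obstacle: since $X_k$ and $\theta_k$ are coupled through the history, one cannot take expectations to zero it out directly. The workaround, following \citep{srikant2019finite}, is to rewind the iterate by $t_k$ steps, because by the definition $t_k=t_{\mathrm{mix}}(\epsilon_k/(2L))$ the chain has mixed to within $\epsilon_k/(2L)$ of $\mu$, so $X_k$ is nearly decoupled from $\theta_{k-t_k}$.

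Carrying out this rewinding is the heart of the proof. I would write $\theta_k-\theta^* = (\theta_{k-t_k}-\theta^*)+(\theta_k-\theta_{k-t_k})$ and simultaneously
\begin{align*}
F(X_k,\theta_k)-\bar{F}(\theta_k) = [F(X_k,\theta_k)-F(X_k,\theta_{k-t_k})]+[\bar{F}(\theta_{k-t_k})-\bar{F}(\theta_k)]+[F(X_k,\theta_{k-t_k})-\bar{F}(\theta_{k-t_k})].
\end{align*}
Assumption~\ref{as:Lipschitz} bounds each of the first two bracketed terms by $L\|\theta_k-\theta_{k-t_k}\|$, while conditioning on the $\sigma$-algebra generated by $X_0,\dots,X_{k-t_k}$ and invoking the total-variation mixing estimate bounds the last bracket by roughly $\epsilon_k(\|\theta_{k-t_k}\|+1)$. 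The remaining key ingredient is the estimate $\|\theta_k-\theta_{k-t_k}\|\leq C\,\epsilon_{k-t_k,k-1}(\|\theta_{k-t_k}\|+1)$, proved by a short induction on the recursion \eqref{sa:algorithm} using \eqref{affine_growth_rate-F}; this is precisely where Condition~\ref{as:stepsize}(2) is invoked to ensure $\epsilon_{k-t_k,k-1}$ is small and the resulting $V(\theta_k)$-proportional cross terms can be absorbed into the negative drift with margin to spare. Tracking all the constants carefully is what produces the $114$ inside $\beta_2$, and the bias term ends up bounded by a multiple of $\hat{\epsilon}_k\bigl[V(\theta_k)+(\|\theta^*\|+1)^2\bigr]$, completing the one-step drift inequality.

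Finally, iterating the drift inequality from $k=K$ telescopes to
\begin{align*}
\mathbb{E}[V(\theta_k)] \leq \mathbb{E}[V(\theta_K)]\prod_{j=K}^{k-1}(1-\alpha\epsilon_j)+\beta_2\sum_{i=K}^{k-1}\hat{\epsilon}_i\prod_{j=i+1}^{k-1}(1-\alpha\epsilon_j).
\end{align*}
To identify $\mathbb{E}[V(\theta_K)]\leq \beta_1=(\|\theta_0\|+\|\theta_0-\theta^*\|+1)^2$, I would run a short deterministic induction on $k\in\{0,\dots,K\}$ using \eqref{affine_growth_rate-F} and the burn-in bound $\epsilon_{0,K-1}\leq 1/(4L)$ from Condition~\ref{as:stepsize}(2). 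The hardest step will be the bias analysis above — specifically the inductive control of $\|\theta_k-\theta_{k-t_k}\|$ without any projection, which is exactly what the affine growth bound \eqref{affine_growth_rate-F} (itself a direct consequence of Assumption~\ref{as:Lipschitz} together with $L\geq \max_x\|F(x,0)\|$) is designed to enable.
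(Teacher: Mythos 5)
Your proposal is correct and follows essentially the same route as the paper: the same Lyapunov function $V(\theta)=\|\theta-\theta^*\|^2$, the same drift/bias/discretization split, the same rewinding to time $k-t_k$ with conditioning and the mixing-time estimate, the same induction-based control of $\|\theta_k-\theta_{k-t_k}\|$ via the affine growth bound (the paper's Lemmas \ref{le_1}--\ref{le:recursion}), and the same telescoping with $\mathbb{E}[\|\theta_K-\theta^*\|^2]\leq\beta_1$ from the burn-in condition. The only cosmetic difference is that the paper first splits the bias term on the factor $\theta_k-\theta^*=(\theta_k-\theta_{k-t_k})+(\theta_{k-t_k}-\theta^*)$ and then Lipschitz-shifts the function argument inside the second piece, whereas you perform both decompositions simultaneously.
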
	
\begin{remark}
Although the parameter $K$ is defined as $K=\min\{k:k\geq  t_k\}$, we indeed have $K=t_K$. To see this, suppose that $K>t_K$. Since both $K$ and $t_K$ are integers, we must have $K-1\geq t_K\geq t_{K-1}$, where the second inequality follows from the fact that $t_k=t_{\alpha_k}$ is an increasing function of $k$. This contradict to the definition of $K$ and hence we have $K=t_K$.
\end{remark}

On the RHS of Eq. (\ref{eq:sa-bound}), the first term represents the bias due to the initial guess $\theta_0$, and the second term captures the variance due to the noise. Theorem \ref{thm:main} is one of our main contributions in that (1) the function $F(x,\theta)$ is allowed to be nonlinear, (2) it holds when $\{X_k\}$ is a Markov chain instead of being i.i.d., and (3) no modification on Algorithm (\ref{sa:algorithm}) (e.g., adding a projection step) is needed to establish the results.

After establishing the finite-sample bounds of Algorithm (\ref{sa:algorithm}) in its general form, we next consider several common choices of stepsizes, and determine the corresponding convergence rates. We begin by presenting the result when using constant stepsize. The proof of the following corollary is presented in Section \ref{pf:thm:constant_step_size}.

\begin{corollary}\label{thm:constant_step_size}
When $\alpha_k\equiv\alpha$ with $\alpha$ chosen s.t. $\alpha t_\alpha\leq \frac{c_0}{130L^2}$, we have 
\begin{align*}
	\mathbb{E}[\|\theta_{k}-\theta^*\|^2]
	\leq \beta_1(1-c_0\alpha)^{k-t_\alpha}+\beta_2\frac{\alpha t_\alpha}{c_0},\quad \forall\;k\geq t_\alpha.
\end{align*} 
\end{corollary}

We see from Corollary \ref{thm:constant_step_size} that when using constant stepsize, the bias term converges to zero geometrically fast as the number of iterations increases, while the variance term remains as a constant of size $O(\alpha\log(1/\alpha))$. Observe that $t_\alpha\leq L_3(\log(1/\alpha)+1)$ (cf. Eq. (\ref{eq:mixing_time_bound})). Therefore, constant stepsize efficiently eliminates the bias. However, since the noise is added to the iterates without being progressively suppressed, the variance does not converge to zero as $k\rightarrow\infty$.

We next consider diminishing stepsizes. Let \begin{align}\label{eq:stepsize_diminishing}
	\alpha_k=\frac{\alpha}{(k+h)^\xi},
\end{align}
where $\alpha>0$, $\xi\in (0,1]$, and $h$ is chosen such that Condition \ref{as:stepsize} is satisfied. The requirement for choosing $h$ and the proof of the following corollary are presented in Section \ref{pf:thm:diminishing_step_size}.

\begin{corollary}\label{thm:diminishing_step_size}
Suppose $\{\alpha_k\}$ is chosen as in Eq. (\ref{eq:stepsize_diminishing}), then we have the following finite-sample bounds.
\begin{enumerate}[(1)]
	\item \begin{enumerate}[(a)]
		\item When $\xi=1$ and $\alpha<1/c_0$, we have for all $k\geq K$:
	\begin{align*}
		\mathbb{E}[\|\theta_k-\theta^*\|^2]\leq \beta_1\left(\frac{K+h}{k+h}\right)^{c_0\alpha}+\frac{8\beta_2\alpha^2L_3}{1-c_0\alpha}\frac{[\log\left(\frac{k+h}{\alpha}\right)+1]}{(k+h)^{c_0\alpha}}.
	\end{align*}
	\item When $\xi=1$ and $\alpha=1/c_0$, we have for all $k\geq K$:
	\begin{align*}
		\mathbb{E}[\|\theta_k-\theta^*\|^2]\leq\beta_1\left(\frac{K+h}{k+h}\right)+8\beta_2\alpha^2L_3\frac{\log(k+h)[\log\left(\frac{k+h}{\alpha}\right)+1]}{k+h}.
	\end{align*}
	\item When $\xi=1$ and $\alpha>1/c_0$, we have for all $k\geq K$:
	\begin{align*}
		\mathbb{E}[\|\theta_k-\theta^*\|^2]\leq\beta_1\left(\frac{K+h}{k+h}\right)^{c_0\alpha}+\frac{8e\beta_2\alpha^2L_3}{c_0\alpha-1} \frac{\left[\log\left(\frac{k+h}{\alpha}\right)+1\right]}{k+h}.
	\end{align*}
	\end{enumerate}
\item When $\xi\in (0,1)$ and $\alpha>0$, assume without loss of generality that $K\geq [2\xi/(c_0\alpha)]^{1/(1-\xi)}$, then we have for all $k\geq K$:
\begin{align*}
	\mathbb{E}[\|\theta_k-\theta^*\|^2]
	\leq\; \beta_1e^{-\frac{c_0\alpha}{1-\xi}\left((k+h)^{1-\xi}-(K+h)^{1-\xi}\right)}+\frac{4\beta_2\alpha L_3}{c_0}\frac{[\log\left(\frac{k+h}{\alpha}\right)+1]}{(k+h)^\xi}.
\end{align*}
\end{enumerate}
\end{corollary}

Observe from Corollary \ref{thm:diminishing_step_size} (1) that when using $\alpha_k=\alpha/(k+h)$, the constant $\alpha$ must be chosen carefully (i.e., $\alpha>1/c_0$) to achieve the optimal $O(\log(k)/k)$ convergence rate, otherwise the convergence rate is $O(\log(k)/k^{c_0\alpha})$, which can be arbitrarily slow. From Corollary \ref{thm:diminishing_step_size} (2), we see that when $\xi\in (0,1)$, the convergence rate is $O(\log(k)/k^\xi)$, which is sub-optimal, but more robust in the sense that it is independent of $\alpha$. The above analysis indicates that our choice of stepsizes should depend on how precise our estimate of the negative drift parameter $c_0$ is. When our estimate of $c_0$ is accurate, we should use $\alpha_k=\alpha/(k+h)$ with $\alpha>1/c_0$ so that the convergence rate is the optimal $O(\log(k)/k)$. When our understanding to the system model is poor (therefore inaccurate estimate of $c_0$), we should use $\alpha_k=\alpha/(k+h)^\xi$ with $\xi$ chosen close to unity. In that case, we sacrifice the convergence rate for robustness.

Unlike almost sure convergence, where the usual requirement for stepsizes are $\sum_{k=0}^{\infty}\alpha_k=\infty$ and $\sum_{k=0}^{\infty}\alpha_k^2<\infty$ (which corresponds to $\xi\in (1/2,1]$ in our case), we have convergence in the mean-square sense for all $\xi\in (0,1]$. The same phenomenon has been observed in \cite{bhandari2018finite}, where they study linear SA and nonlinear SA with martingale difference noise.

\subsection{Proof of Theorem \ref{thm:main}}\label{subsec:sa:theorem-proof}
In this section, we present the proof of Theorem \ref{thm:main}. Before going into the details, we first provide some intuition. Recall that the Lyapunov function $W(\theta)=\|\theta-\theta^*\|^2$ can be used to show the stability of ODE (\ref{sa:ode}). To analyze the convergence rate of the iterates $\{\theta_{k}\}$ generated by Algorithm (\ref{sa:algorithm}), naturally we want to use the Lyapunov  function $W(\cdot)$ on $\{\theta_k\}$ to show something like
\begin{align}\label{eq:hope-to-show}
	\mathbb{E}[W(\theta_{k +1})]-\mathbb{E}[W(\theta_k)]
	\leq (-c_0\alpha_k + e_1)\mathbb{E}[W(\theta_k)]+e_2.
\end{align}
This is a discrete analog of Eq. (\ref{Vdot}), and so $W(\cdot)$ is a Lyapunov function \cite{haddad2011nonlinear}. In continuous time, Eq. (\ref{Vdot}) enables one to determine the rate of convergence of ODE (\ref{sa:ode}). Eq. \eqref{eq:hope-to-show} is the discrete-time equivalent for SA algorithm (\ref{sa:algorithm}). To make connection to standard control literature, suppose we view $e_2$ as the input. Then when $e_2=0$, Eq. (\ref{eq:hope-to-show}) is of the desired form used to prove asymptotic stability \cite{sontag2008input}. In our case, due to a non-vanishing $e_2$, when using constant stepsize we do not have asymptotic convergence but have convergence to a neighborhood around $\theta^*$. 

Here on the RHS of Eq. (\ref{eq:hope-to-show}), the $-c_0\alpha_k$ term corresponds to the negative drift of the ODE, and the two terms $e_1$ and $e_2$ account for the discretization error and the stochastic error in Algorithm (\ref{sa:algorithm}). The discretization error can be handled using the properties of the function $F(x,\theta)$ (cf. Assumption \ref{as:Lipschitz}) and properly chosen stepsizes (cf. Condition \ref{as:stepsize}). As for the stochastic error, since Markovian noise naturally produces bias in the update, we show that $\mathbb{E}[F(X_k,\theta)\mid X_0=x]$ converges to $\bar{F}(\theta)$ (as $k$ increases) fast enough for any $\theta$, where we make use of Assumption \ref{as:markov-chain} (1). Once we show that both error terms are dominated by the drift term, i.e., $e_1=o(\alpha_k)$ and $e_2=o(\alpha_k)$, Eq. (\ref{eq:hope-to-show}) can be repeatedly used to establish a finite-sample bound of Algorithm (\ref{sa:algorithm}).

Following from the high level idea stated above, we now prove Theorem \ref{thm:main}. To begin, we apply $W(\theta)=\|\theta-\theta^*\|^2$ on the iterates $\theta_k$ of Algorithm (\ref{sa:algorithm}). To utilize the mixing time of the Markov chain $\{X_k\}$, we take expectation conditioning on $X_{k-t_k}$ and $\theta_{k-t_k}$. For simplicity of notation, we use $\mathbb{E}_k[\,\cdot\,]$ for $\mathbb{E}[\cdot\mid X_{k-t_k},\theta_{k-t_k}]$ in the following. Then we have for all $k\geq t_k$: 
\begin{align}
	\mathbb{E}_k[\|\theta_{k+1}-\theta^*\|^2]-\mathbb{E}_k[\|\theta_{k}-\theta^*\|^2]
	=\;&2\mathbb{E}_k[(\theta_k-\theta^*)^\top(\theta_{k+1}-\theta_k)]+\mathbb{E}_k[\|\theta_{k+1}-\theta_k\|^2]\nonumber\\
	=\;&\underbrace{2\alpha_k \mathbb{E}_k[(\theta_k-\theta^*)^\top\bar{F}(\theta_k)]}_{(a)}+\underbrace{2\alpha_k \mathbb{E}_k[(\theta_k-\theta^*)^\top w_k]}_{(b)}\nonumber\\
	&+\underbrace{2\alpha_k \mathbb{E}_k[(\theta_k-\theta^*)^\top(F(X_k,\theta_k)-\bar{F}(\theta_k))]}_{(c)}\nonumber\\
	&+\underbrace{\alpha_k^2\mathbb{E}_k[\|F(X_k,\theta_k)+w_k\|^2]}_{(d)},\label{eq:expand}
\end{align}
where the last line follows by using the update equation (\ref{sa:algorithm}) and by adding and subtracting $\bar{F}(\theta_k)$.

The term $(a)$ corresponds to the negative drift of ODE (\ref{sa:ode}), and we have $(a)\leq -2c_0\alpha_k\mathbb{E}_k\left[\|\theta_k-\theta^*\|^2\right]$ under Assumption \ref{as:stability}.
The term $(b)$ corresponds to the error due to martingale difference noise $\{w_k\}$. Using the tower property of conditional expectation and the assumption that $w_k$ is a martingale difference sequence, we have $(b)=0$. The term $(c)$ corresponds to the error due to the Markovian noise $\{X_k\}$, and the term $(d)$ arises mainly because of the error due to discretization. What remains to be shown is that the terms $(c)$ and $(d)$ are dominated by the term $(a)$. We begin by bounding the term $(d)$ in the following lemma, whose proof is presented in Section \ref{pf:le_1}.
\begin{lemma}\label{le_1}
The following inequality holds for all $k\geq t_k$:
\begin{align*}
(d)\leq 2L^2\alpha_k^2\left[\mathbb{E}_k[\|\theta_k-\theta^*\|^2]+(\|\theta^*\|+1)^2\right].
\end{align*}
\end{lemma}

Observe that Lemma \ref{le_1} implies that $(d)=O(\alpha_k^2)=o(\alpha_k)$, which is what we desire. We next consider the term $(c)$. To control it, we need the following two results. 
\begin{lemma}\label{le:mixing_time}
For any given $\delta>0$, the following inequality holds for any $x$, $\theta$, and $k\geq t_\delta$: 
\begin{align*}
	\|\mathbb{E}[F(X_k,\theta)\mid X_0=x]-\bar{F}(\theta)\|
	\leq 2L_1\delta(\|\theta\|+1).
\end{align*}
\end{lemma}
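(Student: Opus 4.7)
The plan is to directly unfold the conditional expectation using the transition kernel and then bound the resulting difference via the total variation distance. Concretely, since $\mathcal{X}$ is finite,
\begin{align*}
\mathbb{E}[F(X_k,\theta)\mid X_0=x]-\bar{F}(\theta)
=\sum_{y\in\mathcal{X}}F(y,\theta)\bigl(P^k(x,y)-\mu(y)\bigr),
\end{align*}
so by the triangle inequality in $\mathbb{R}^d$,
\begin{align*}
\|\mathbb{E}[F(X_k,\theta)\mid X_0=x]-\bar{F}(\theta)\|
\leq \max_{y\in\mathcal{X}}\|F(y,\theta)\|\sum_{y\in\mathcal{X}}|P^k(x,y)-\mu(y)|
= 2\max_{y\in\mathcal{X}}\|F(y,\theta)\|\cdot \|P^k(x,\cdot)-\mu\|_{\text{TV}},
\end{align*}
where the last equality uses Definition \ref{df:TVD}.

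Next I would substitute the two pointwise bounds already available in the paper. Assumption \ref{as:Lipschitz} (via Eq.~(\ref{affine_growth_rate-F})) gives $\|F(y,\theta)\|\leq L(\|\theta\|+1)$ uniformly in $y$, while the definition $t_\delta=t_{\text{mix}}(\delta/(2L))$ together with the definition of $d_{\max}(\cdot)$ yields $\|P^k(x,\cdot)-\mu\|_{\text{TV}}\leq \delta/(2L)$ for every $x\in\mathcal{X}$ and every $k\geq t_\delta$. Combining these with the display above,
\begin{align*}
\|\mathbb{E}[F(X_k,\theta)\mid X_0=x]-\bar{F}(\theta)\|
\leq 2L(\|\theta\|+1)\cdot\frac{\delta}{2L}=\delta(\|\theta\|+1),
\end{align*}
which is the claimed inequality.

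There is no real obstacle here: the argument is the standard ``expectation gap is controlled by total variation times sup-norm of the integrand'' estimate, and the only tailoring is that the integrand grows affinely in $\|\theta\|$ rather than being bounded. The affine factor $(\|\theta\|+1)$ is precisely what the mixing-time scaling $\delta/(2L)$ was chosen to cancel, so the constants line up cleanly without any additional work.
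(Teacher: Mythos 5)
Your proof is correct and follows essentially the same route as the paper's: unfold the conditional expectation over the finite state space, apply the triangle inequality, bound $\|F(y,\theta)\|$ by $L(\|\theta\|+1)$ via Eq.~(\ref{affine_growth_rate-F}), and convert the $\ell_1$ distance to twice the total variation distance, which the definition of $t_\delta$ bounds by $\delta/(2L)$. The only cosmetic difference is that you factor out $\max_{y}\|F(y,\theta)\|$ before summing, whereas the paper bounds each summand in place; the constants and logic are identical.
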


Lemma \ref{le:mixing_time} uses the mixing time to bound the bias (due to Markovian noise) in Algorithm (\ref{sa:algorithm}). See Section \ref{pf:le:mixing} for the proof. The next result uses the Lipschitz condition to control the difference between $\theta_{k_1}$ and $\theta_{k_2}$ when $|k_2-k_1|$ is not too large. 
\begin{lemma}\label{le_2}
For any $k_1<k_2$ satisfying $\alpha_{k_1,k_2-1}\leq \frac{1}{4L}$, the following two inequalities hold:
\begin{enumerate}[(1)]
	\item $\|\theta_{k_2}-\theta_{k_1}\|\leq 2L\alpha_{k_1,k_2-1}(\|\theta_{k_1}\|+1)$,
	\item  $\|\theta_{k_2}-\theta_{k_1}\|\leq 4L\alpha_{k_1,k_2-1}(\|\theta_{k_2}\|+1)$.
\end{enumerate}
\end{lemma}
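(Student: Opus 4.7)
The plan is to prove the first inequality by iterating the update rule and controlling the intermediate iterates via a discrete Gr\"onwall-type argument, and then derive the second inequality from the first by a simple triangle-inequality rearrangement.

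From the update rule (\ref{sa:algorithm}) and the affine growth bound (\ref{affine_growth_rate-F}), I have $\|\theta_{k+1}-\theta_k\|\leq L\epsilon_k(\|\theta_k\|+1)$ for every $k$. To remove the dependence on the intermediate iterates, set $D_k:=\|\theta_k-\theta_{k_1}\|$ and bound $\|\theta_k\|\leq \|\theta_{k_1}\|+D_k$. Combined with the triangle inequality $D_{k+1}\leq D_k+\|\theta_{k+1}-\theta_k\|$, this yields the affine recursion $D_{k+1}\leq (1+L\epsilon_k)D_k+L\epsilon_k(\|\theta_{k_1}\|+1)$ with $D_{k_1}=0$, which a one-line induction unrolls into the closed form $D_{k_2}\leq (\|\theta_{k_1}\|+1)\bigl[\prod_{k=k_1}^{k_2-1}(1+L\epsilon_k)-1\bigr]$. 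Bounding the product by $\exp(L\epsilon_{k_1,k_2-1})$ and using the elementary inequality $e^x-1\leq 2x$ valid on $[0,1/4]$ together with the hypothesis $L\epsilon_{k_1,k_2-1}\leq 1/4$ yields the first claim $\|\theta_{k_2}-\theta_{k_1}\|\leq 2L\epsilon_{k_1,k_2-1}(\|\theta_{k_1}\|+1)$.

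For the second inequality, I substitute $\|\theta_{k_1}\|\leq \|\theta_{k_2}\|+\|\theta_{k_2}-\theta_{k_1}\|$ into the first bound and collect the $\|\theta_{k_2}-\theta_{k_1}\|$ terms on one side. Since the hypothesis also implies $2L\epsilon_{k_1,k_2-1}\leq 1/2$, the resulting coefficient $1-2L\epsilon_{k_1,k_2-1}$ is bounded below by $1/2$, and dividing through produces the stated constant $4L$.

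The main obstacle is the circular dependence in the first step: one cannot bound $\|\theta_{k_2}-\theta_{k_1}\|$ in terms of $\|\theta_{k_1}\|$ alone without first controlling each intermediate $\|\theta_k\|$, which in turn depends on the very quantity we are trying to bound. The Gr\"onwall-style recursion is what breaks this circularity, and the step-size assumption $\epsilon_{k_1,k_2-1}\leq 1/(4L)$ is used precisely to keep the telescoping product $\prod(1+L\epsilon_k)$ close to $1$, so that the clean linear constants $2L$ and $4L$ can be extracted; the remaining manipulations are routine algebra.
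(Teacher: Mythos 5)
Your proof is correct and follows essentially the same route as the paper's: both rest on the one-step bound $\|\theta_{t+1}-\theta_t\|\leq L\epsilon_t(\|\theta_t\|+1)$, a discrete Gr\"onwall product bounded via $1+x\leq e^x$ and then $e^x\leq 1+2x$ on the small interval guaranteed by $\epsilon_{k_1,k_2-1}\leq 1/(4L)$, and the identical triangle-inequality rearrangement for the second inequality. Running the recursion on $\|\theta_k-\theta_{k_1}\|$ rather than on $\|\theta_k\|+1$ as the paper does is only a cosmetic difference in bookkeeping.
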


The proof of Lemma \ref{le_2} is presented in
Section \ref{pf:le_2}. With the help of Lemmas \ref{le:mixing_time} and \ref{le_2}, we are now ready to bound the term $(c)$ in the following lemma. See Section \ref{pf:le_3} for the proof.
\begin{lemma}\label{le_3}
The following inequality holds for all $k$ such that $\alpha_{k-t_k,k-1}\leq \frac{1}{4L}$ (where we recall that $\alpha_{k-t_k,k-1}=\sum_{i=k-t_k}^{k-1}\alpha_i$): 
\begin{align*}
	(c)
	\leq 128L^2\alpha_k\alpha_{k-t_k,k-1}\left[\mathbb{E}_k[\|\theta_k-\theta^*\|^2]+(\|\theta^*\|+1)^2\right].
\end{align*}
\end{lemma}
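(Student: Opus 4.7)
The plan is to use the standard ``look $t_k$ steps into the past'' trick so that the iterate inside $F$ becomes effectively deterministic with respect to the conditioning sigma-algebra $\sigma(X_{k-t_k},\theta_{k-t_k})$, after which Lemma~\ref{le:mixing_time} applies. Concretely, I would start by adding and subtracting terms to write
\begin{align*}
F(X_k,\theta_k)-\bar F(\theta_k)
&=\underbrace{[F(X_k,\theta_k)-F(X_k,\theta_{k-t_k})]}_{T_1}
+\underbrace{[F(X_k,\theta_{k-t_k})-\bar F(\theta_{k-t_k})]}_{T_2}\\
&\qquad+\underbrace{[\bar F(\theta_{k-t_k})-\bar F(\theta_k)]}_{T_3},
\end{align*}
so that $(b)$ splits into three inner products, call them $(b_1),(b_2),(b_3)$.

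For $(b_1)$ and $(b_3)$, I apply Cauchy--Schwarz, use Assumption~\ref{as:Lipschitz} (plus Jensen for $\bar F$) to get $\|T_1\|,\|T_3\|\le L\|\theta_k-\theta_{k-t_k}\|$, and then bound the displacement via Lemma~\ref{le_2}. Since the hypothesis $\epsilon_{k-t_k,k-1}\le 1/(4L)$ is exactly what that lemma needs, this produces terms of order $L^2\epsilon_k\epsilon_{k-t_k,k-1}\|\theta_k-\theta^*\|(\|\theta_{k-t_k}\|+1)$ (using either form of Lemma~\ref{le_2} whichever is more convenient).

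The heart of the argument is $(b_2)$. I decompose the leading factor as $\theta_k-\theta^*=(\theta_{k-t_k}-\theta^*)+(\theta_k-\theta_{k-t_k})$. For the $(\theta_{k-t_k}-\theta^*)$ piece, I pull it outside the conditional expectation (it is $\sigma(X_{k-t_k},\theta_{k-t_k})$-measurable) and observe that, by the Markov property, $X_k$ is conditionally independent of $\theta_{k-t_k}$ given $X_{k-t_k}$; hence $\mathbb{E}_k[F(X_k,\theta_{k-t_k})]=\mathbb{E}[F(X_k,\theta_{k-t_k})\mid X_{k-t_k},\theta_{k-t_k}]$ can be evaluated as if $\theta_{k-t_k}$ were deterministic. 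Lemma~\ref{le:mixing_time} with $\delta=\epsilon_k$ and the starting state $X_{k-t_k}$ then yields $\|\mathbb{E}_k[T_2]\|\le \epsilon_k(\|\theta_{k-t_k}\|+1)$. For the remaining $(\theta_k-\theta_{k-t_k})$ piece, I simply use the crude bound $\|T_2\|\le 2L(\|\theta_{k-t_k}\|+1)$ from the affine growth property \eqref{affine_growth_rate-F}--\eqref{affine_growth_rate-barF}, together with Lemma~\ref{le_2}. Both contributions are again of order $L^2\epsilon_k\epsilon_{k-t_k,k-1}(\|\theta_{k-t_k}\|+1)^2$ after noting that $\epsilon_k\le \epsilon_{k-t_k,k-1}$.

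The final step is purely bookkeeping: convert every occurrence of $\|\theta_{k-t_k}\|$ into a function of $\|\theta_k-\theta^*\|$ and $\|\theta^*\|$. I would use the triangle inequality $\|\theta_{k-t_k}\|\le \|\theta_k-\theta^*\|+\|\theta^*\|+\|\theta_k-\theta_{k-t_k}\|$, bound the last summand again by Lemma~\ref{le_2} (using the $\|\theta_k\|+1$ form to avoid circularity), and collapse the resulting products using $2ab\le a^2+b^2$ and $(a+b)^2\le 2a^2+2b^2$. After taking the outer conditional expectation $\mathbb{E}_k$ and collecting the constants, the three pieces combine into a single bound of the promised form $112L^2\epsilon_k\epsilon_{k-t_k,k-1}[\mathbb{E}_k[\|\theta_k-\theta^*\|^2]+(\|\theta^*\|+1)^2]$. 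The main obstacle is not conceptual but numerical: chasing all the constants through the three decompositions so that the final coefficient is no larger than $112$, which forces me to be economical with triangle inequalities and to lean on the slack provided by $\epsilon_k\le \epsilon_{k-t_k,k-1}\le 1/(4L)$.
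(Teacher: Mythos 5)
Your proposal is correct and follows essentially the same route as the paper's proof: add and subtract $\theta_{k-t_k}$ so that Lemma \ref{le_2} controls the displacement terms and Lemma \ref{le:mixing_time} (applied to the $\sigma(X_{k-t_k},\theta_{k-t_k})$-measurable factor $\theta_{k-t_k}-\theta^*$) controls the bias, then convert everything to $\|\theta_k-\theta^*\|^2+(\|\theta^*\|+1)^2$. The only difference is cosmetic --- you split the difference $F(X_k,\theta_k)-\bar F(\theta_k)$ into three pieces first, whereas the paper splits the leading vector $\theta_k-\theta^*$ first --- and the resulting atomic terms and constants work out comparably.
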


Substituting the upper bounds we obtained for the terms $(a)-(d)$ into Eq. (\ref{eq:expand}), we have the following result, whose proof is presented in Section \ref{pf:le:recursion}.
\begin{lemma}\label{le:recursion}
It holds for all $k$ satisfying $\alpha_{k-t_k,k-1}\leq \frac{1}{4L}$ that:
\begin{align}
	\mathbb{E}[\|\theta_{k+1}-\theta^*\|^2]
	\leq\;&(1-2c_0\alpha_k+130L^2\alpha_k\alpha_{k-t_k,k-1})\mathbb{E}[\|\theta_k-\theta^*\|^2]\nonumber\\
	&+130L^2\alpha_k\alpha_{k-t_k,k-1}(\|\theta^*\|+1)^2.\label{eq:recursive_bound}
\end{align}
\end{lemma}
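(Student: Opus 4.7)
The plan is to put together the pieces already assembled: the decomposition (\ref{eq:expand}) of $\mathbb{E}_k[\|\theta_{k+1}-\theta^*\|^2]-\mathbb{E}_k[\|\theta_k-\theta^*\|^2]$ into the three terms $(a)$, $(b)$, $(c)$, plus the bounds on each term produced in the surrounding discussion and in Lemmas \ref{le_1} and \ref{le_3}. There is almost no new ideas at this stage; the work is in choosing the right common factor so that the three contributions can be combined cleanly.

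Concretely, I would first bound $(a)\leq -2\alpha\epsilon_k\mathbb{E}_k[\|\theta_k-\theta^*\|^2]$ via Assumption \ref{as:stability}, then invoke Lemma \ref{le_3} to bound $(b)$ by $112L^2\epsilon_k\epsilon_{k-t_k,k-1}\bigl(\mathbb{E}_k[\|\theta_k-\theta^*\|^2]+(\|\theta^*\|+1)^2\bigr)$ (this uses the hypothesis $\epsilon_{k-t_k,k-1}\leq 1/(4L)$), and Lemma \ref{le_1} to bound $(c)$ by $2L^2\epsilon_k^2\bigl(\mathbb{E}_k[\|\theta_k-\theta^*\|^2]+(\|\theta^*\|+1)^2\bigr)$. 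Adding these three upper bounds gives a conditional-expectation inequality of the schematic form
\begin{align*}
\mathbb{E}_k[\|\theta_{k+1}-\theta^*\|^2]
&\leq (1-2\alpha\epsilon_k+2L^2\epsilon_k^2+112L^2\epsilon_k\epsilon_{k-t_k,k-1})\mathbb{E}_k[\|\theta_k-\theta^*\|^2]\\
&\quad+(2L^2\epsilon_k^2+112L^2\epsilon_k\epsilon_{k-t_k,k-1})(\|\theta^*\|+1)^2.
\end{align*}

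The small bookkeeping step is to merge the $2L^2\epsilon_k^2$ piece into the $112L^2\epsilon_k\epsilon_{k-t_k,k-1}$ piece. Since the step sizes are non-increasing by Condition \ref{as:stepsize}(1), and since $t_k\geq 1$ in the regime of interest, we have $\epsilon_{k-t_k,k-1}=\sum_{j=k-t_k}^{k-1}\epsilon_j\geq \epsilon_{k-1}\geq \epsilon_k$, so $\epsilon_k^2\leq \epsilon_k\epsilon_{k-t_k,k-1}$. This yields $2L^2\epsilon_k^2+112L^2\epsilon_k\epsilon_{k-t_k,k-1}\leq 114L^2\epsilon_k\epsilon_{k-t_k,k-1}$, which produces exactly the constant in the statement.

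Finally, I would take an outer (unconditional) expectation of both sides and apply the tower property $\mathbb{E}[\mathbb{E}_k[\,\cdot\,]]=\mathbb{E}[\,\cdot\,]$ to replace $\mathbb{E}_k$ by $\mathbb{E}$, arriving at (\ref{eq:recursive_bound}). The only subtlety worth flagging is the requirement $k\geq t_k$ needed so that $\mathbb{E}_k$ conditions on a genuine earlier instant; this is implicit in the hypothesis $\epsilon_{k-t_k,k-1}\leq 1/(4L)$ since the sum is otherwise vacuous. I do not anticipate any real obstacle: the heavy lifting (controlling discretization via Assumption \ref{as:Lipschitz} and handling Markovian bias via Lemma \ref{le:mixing_time} together with Lemma \ref{le_2}) has already been packaged into Lemmas \ref{le_1} and \ref{le_3}, and the present lemma is essentially an assembly step.
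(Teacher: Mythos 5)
Your proposal is correct and follows essentially the same route as the paper's own proof: substitute the bounds on $(a)$, $(b)$, and $(c)$ into Eq. (\ref{eq:expand}), absorb the $2L^2\epsilon_k^2$ term into the $112L^2\epsilon_k\epsilon_{k-t_k,k-1}$ term via $\epsilon_k\leq\epsilon_{k-t_k,k-1}$, and take total expectation. No gaps to report.
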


Eq. (\ref{eq:recursive_bound}) is of the desired recursive form (\ref{eq:hope-to-show}). Therefore, as long as the drift term dominates the error terms, i.e., $2c_0\alpha_k> 130L^2\alpha_k\alpha_{k-t_k,k-1}$, we can repeatedly use Eq. (\ref{eq:recursive_bound}) to derive finite-sample error bounds of Algorithm (\ref{sa:algorithm}). In particular, when Condition \ref{as:stepsize} is satisfied and $k\geq K$ (see Theorem \ref{thm:main} for the definition of $K$), we have by Eq. (\ref{eq:recursive_bound}) that
\begin{align*}
\mathbb{E}[\|\theta_{k+1}-\theta^*\|^2]
\leq(1-c_0\alpha_k)\mathbb{E}[\|\theta_k-\theta^*\|^2]+\beta_2\hat{\alpha}_k,
\end{align*}
where $\hat{\alpha}_k$ and $\beta_2$ are defined in Theorem \ref{thm:main}. Repeatedly using the preceding inequality starting from $K$, we obtain
\begin{align*}
\mathbb{E}[\|\theta_k-\theta^*\|^2]
\leq \mathbb{E}[\|\theta_K-\theta^*\|^2]\prod_{j=K}^{k-1}(1-c_0\alpha_j)+\beta_2\sum_{i=K}^{k-1}\hat{\alpha}_i\prod_{j=i+1}^{k-1}(1-c_0\alpha_{j}).
\end{align*}
To bound $\mathbb{E}[\|\theta_K-\theta^*\|^2]$, we use Lemma \ref{le_2} and $\alpha_{K-t_K,K-1}=\alpha_{0,K-1}\leq \frac{1}{4L}$ to obtain
\begin{align*}
\mathbb{E}[\|\theta_K-\theta^*\|^2]
\leq \mathbb{E}[(\|\theta_{K}-\theta_0\|+\|\theta^*-\theta_0\|)^2]\leq  \beta_1.
\end{align*}
The proof is now complete.

In this section, we have presented the finite-sample convergence bounds for SA algorithm (\ref{sa:algorithm}), and analyzed the convergence rates when using both constant and diminishing stepsizes. In the following section, to demonstrate the power of our SA results, we will use them to establish finite-sample guarantees for the $Q$-learning with linear function approximation algorithm that is frequently used to solve RL problems.

\section{Applications in Reinforcement Learning}\label{sec:RL}
We begin by introducing the Markov decision process (MDP) and the RL problem. An infinite horizon discounted MDP $\mathcal{M}$ is comprised by a tuple $(\mathcal{S},\mathcal{A},\mathcal{P},\mathcal{R},\gamma)$, where $\mathcal{S}$ is a set of states, $\mathcal{A}$ is a set of actions, $\mathcal{P}$ is the set of transition probabilities, $\mathcal{R}:\mathcal{S}\times\mathcal{A}\mapsto[0,r_{\max}]$ is the reward function, and $\gamma\in (0,1)$ is the discount factor. In this paper, we will work with MDPs with finite state-action spaces, i.e., $n:=|\mathcal{S}|<\infty$ and $m:=|\mathcal{A}|<\infty$. In this case, $\mathcal{P}=\{P_a\in\mathbb{R}^{n\times n}\mid a\in\mathcal{A}\}$ is a set of action-dependent transition probability matrices. The underlying model of the RL problem is essentially an MDP except that the transition probabilities and the reward function are unknown to the agent. 

The goal of RL is to find a policy for choosing actions based on the state of the environment such that the expected long-term reward is maximized. Formally, define the state-action value function (aka. the $Q$-function) of a policy $\pi$ at $(s,a)$ by $Q_\pi(s,a)=\mathbb{E}_\pi[\sum_{k=0}^{\infty}\gamma^k\mathcal{R}(S_k,A_k)\mid S_0=s,A_0=a]$, where we use the notation $\mathbb{E}_\pi[\,\cdot\,]$ to mean that the actions are chosen according to policy $\pi$, i.e., $A_k\sim \pi(\cdot|S_k)$ for all $k\geq 1$. Our goal is to find an optimal policy $\pi^*$ in the sense that its corresponding $Q$-function, denote by $Q^*$, satisfies $Q^*(s,a)\geq Q_\pi(s,a)$ for any $(s,a)$ and $\pi$. A fundamental property of the function $Q^*$ is that, if one simply selects actions greedy based on $Q^*$, then that is an optimal policy. That is, $\pi^*(s)\in\arg\max_{a\in\mathcal{A}}Q^*(s,a)$ for all state $s\in\mathcal{S}$ \cite{bertsekas1996neuro}. Therefore, solving the RL problem reduces to finding the optimal $Q$-function.

\subsection{Q-Learning with Linear Function Approximation}\label{subsec:RL:overview}

The $Q$-learning algorithm proposed in \cite{watkins1992q} is a popular approach for estimating the function $Q^*$. However, it is well-known that a major challenge in $Q$-learning is that the algorithm becomes intractable when the number of state-action pairs is large. Therefore, we consider approximating the optimal $Q$-function from a chosen function space. We next describe the approximation architecture.

Let $\phi_i\in\mathbb{R}^{mn}$, $1\leq i\leq d$ be a set of basis vectors. Denote $\phi(s,a)=[\phi_1(s,a),\cdots,\phi_d(s,a)]^\top$, which is a column vector. We assume without loss of generality that the basis vectors $\{\phi_i\}_{1\leq i\leq d}$ are linearly independent and are normalized so that $\|\phi(s,a)\|\leq 1$ for all $(s,a)$. Define the matrix $\Phi\in\mathbb{R}^{mn\times d}$ by
\begin{align*}
	\Phi = \left[\begin{array}{ccc}
		\vert     &  & \vert\\
		\phi_1     & ... & \phi_d\\
		\vert & & \vert
	\end{array}\right] = \left[\begin{array}{ccc}
		\mbox{---}   & \phi(s_{1},a_{1})^\top & \mbox{---}\\
		...    & ... & ...\\
		\mbox{---}   & \phi(s_{n},a_{m})^\top & \mbox{---}
	\end{array}\right].
\end{align*}
Then the linear subspace $\mathcal{W}$ spanned by the basis vectors $\{\phi_i\}$ can be written as $\mathcal{W}=\{\tilde{Q}_{\theta} =  \Phi \theta\mid \theta\in\mathbb{R}^d\}$. We will use $\mathcal{W}$ as our approximating function space, and the goal here is to find $\theta^*$ such that $\tilde{Q}_{\theta^*}$ best approximates $Q^*$. 

Using the notation above, we now present the $Q$-learning algorithm under linear function approximation \cite{bertsekas1996neuro}. Let $\{(S_k,A_k)\}$ be a sample trajectory generated by applying some \textit{behavior} policy $\pi$ to the system model. Note that $\{(S_k,A_k)\}$ forms a Markov chain. Then, the parameter $\theta$ of the approximation $\tilde{Q}_\theta$ is updated by:
\begin{align}\label{algorithm:Q-learning}
\theta_{k+1}=\theta_k+\alpha_k\phi(S_k,A_k)\Delta(\theta_k,S_k,A_k,S_{k+1}),
\end{align}
where $\Delta:\mathbb{R}^d\times\mathcal{S}\times\mathcal{A}\times\mathcal{S}\mapsto\mathbb{R}$ is defined by
\begin{align*}
\Delta(\theta,s,a,s')=\mathcal{R}(s,a)+\gamma\max_{a' \in \mathcal{A}}\phi(s',a')^\top\theta-\phi(s,a)^\top\theta
\end{align*}
for all $\theta$ and $(s,a,s')$, and presents the temporal difference. Algorithm (\ref{algorithm:Q-learning}) can be viewed as an SA algorithm for solving the equation
\begin{align}\label{eq:pbj}
\mathbb{E}_{S\sim \mu_S(\cdot),A\sim\pi(\cdot|S),S'\sim P_{A}(S,\cdot)}[\phi(S,A)\Delta(\theta,S,A,S')]=0,
\end{align}
where $\mu_S$ stands for the stationary distribution of the Markov chain $\{S_k\}$ under policy $\pi$ (provided that it exists and is unique). Under some mild conditions, Eq. (\ref{eq:pbj}) is equivalent to a so-called \textit{projected Bellman equation} \cite{melo2008analysis}. In the special case where the feature matrix $\Phi$ is an identity matrix with dimension $mn$, Algorithm (\ref{algorithm:Q-learning}) reduces to the tabular $Q$-learning algorithm \cite{watkins1992q}, and Eq. (\ref{eq:pbj}) becomes the regular Bellman equation for $Q^*$ \cite{bertsekas1996neuro}.

In general, Eq. (\ref{eq:pbj}) may not necessarily admit a solution, see Appendix \ref{ap:no_solution} for such an example, and the iteration in (\ref{algorithm:Q-learning}) may diverge \cite{baird1995residual}. 
However, it was shown in \cite{melo2008analysis} that under an assumption on the behavior policy $\pi$, $\theta_k$ converges to the solution of Eq. (\ref{eq:pbj}), denoted by $\theta^*$, almost surely. In this paper, we work with a similar condition, and focus on establishing the finite-sample bounds of Algorithm (\ref{algorithm:Q-learning}). We begin by stating our assumptions.
\begin{assumption}\label{as:q-markov-chain}
The behavior policy $\pi$ satisfies $\pi(a|s)>0$ for all $(s,a)$, and the Markov chain $\{S_k\}$ induced by $\pi$ is irreducible and aperiodic. 
\end{assumption}

Assumption \ref{as:q-markov-chain} essentially requires that the behavior policy $\pi$ has enough exploration, and is standard in studying off-policy value-based RL algorithms  \cite{tsitsiklis1997analysis,tsitsiklis1999average}. Under Assumption \ref{as:q-markov-chain}, the Markov chain $\{S_k\}$ has a unique stationary distribution, which we have denoted by $\mu_S$. In addition, since the state-space $\mathcal{S}$ is finite, the Markov chain $\{S_k\}$ mixes geometrically fast in that there exist $C'\geq 1$ and $\rho'\in (0,1)$ such that $\max_{s\in\mathcal{S}}\|P^k(s,\cdot)-\mu_S(\cdot)\|_{\text{TV}}\leq C'\rho'^k$ for all $k\geq 0$ \cite{levin2017markov}.
\begin{assumption}\label{as:q-behhavior-policy}
The target equation (\ref{eq:pbj}) has a unique solution $\theta^*$, and there exists $\kappa>0$ such that the following inequality holds for all $\theta\in\mathbb{R}^d$:
\begin{align}\label{key}
	\gamma^2\mathbb{E}_{\mu_S}[\max_{a \in\mathcal{A}}\tilde{Q}_\theta(S,a)^2]-\mathbb{E}_{\mu_S,\pi}[\tilde{Q}_\theta(S,A)^2]
	\leq -\kappa \|\theta\|^2.
\end{align}
\end{assumption}

We make Assumption \ref{as:q-behhavior-policy} and especially Eq. (\ref{key}) to ensure the stability of Algorithm (\ref{algorithm:Q-learning}), which is in the same spirit to the conditions proposed in \cite{melo2008analysis}. A detailed discussion about this assumption and comparison to related conditions are presented in Section \ref{subsec:RL:discussion}.

\subsection{Finite-Sample Convergence Guarantees}\label{subsec:RL:finite-sample bounds}
To apply our SA results, we begin by modeling Algorithm (\ref{algorithm:Q-learning}) in the form of SA algorithm (\ref{sa:algorithm}). Define  $X_k=(S_k,A_k,S_{k+1})$ for all $k\geq 0$.  It is clear that $\{X_k\}$ is also a Markov chain with finite state-space $\mathcal{X}=\{(s,a,s')\mid s \in \mathcal{S},\pi(a|s)>0,P_a(s,s')>0\}$. Moreover, under Assumption \ref{as:q-markov-chain}, we will show that the Markov chain $\{X_k\}$ also has a unique stationary distribution, which we denote by $\mu_X$ and is given by $\mu_X(s,a,s')=\mu_S(s)\pi(a|s)P_a(s,s')$ for all $(s,a,s')\in\mathcal{X}$.
Define an operator $F:\mathcal{S}\times\mathcal{A}\times\mathcal{S}\times\mathbb{R}^d\mapsto\mathbb{R}^d$ by
\begin{align}\label{def:F-q-learning}
F(x,\theta)=F(s,a,s',\theta)=\phi(s,a)\Delta(\theta,s,a,s')
\end{align}
for all $\theta$ and $x=(s,a,s')$. Then Algorithm \eqref{algorithm:Q-learning} can be written in the same form as the SA algorithm (\ref{sa:algorithm}) with the additive noise $w_k$ being identically equal to zero. Let $\bar{F}(\theta)=\mathbb{E}_{\mu_X}[F(X,\theta)]$. We see that $\bar{F}(\theta)=0$ is exactly the targeting equation (\ref{eq:pbj}).

To apply Theorem \ref{thm:main}, we first show in the following proposition that Assumptions \ref{as:Lipschitz}, \ref{as:stability}, and \ref{as:markov-chain} are satisfied in the context of $Q$-learning. The proof is presented in Section \ref{pf:thm:apply-to-q-learning}.
\begin{proposition}\label{thm:apply-to-q-learning}
Suppose that Assumptions \ref{as:q-markov-chain} and \ref{as:q-behhavior-policy} are satisfied, then we have the following results.
\begin{enumerate}[(1)]
	\item The Markov chain $\{X_k\}$ is irreducible and aperiodic, hence having a unique stationary distribution $\mu_X$. In addition, we have $\max_{x\in\mathcal{X}}\|P^{k+1}(x,\cdot)-\mu_X(\cdot)\|_{\text{TV}}\leq C'\rho'^k$ for all $k\geq 0$.
	\item Let $M= 1+\gamma+r_{\max}$. Then we have (a) $\|F(x,\theta_1)-F(x,\theta_2)\|\leq M\|\theta_1-\theta_2\|$ for all $x$, $\theta_1$, and $\theta_2$, and (b) $\|F(x,0)\|\leq M$ for all $x$.
	\item The equation $\bar{F}(\theta)=0$ has a unique solution $\theta^*$, and we have $(\theta-\theta^*)^\top \bar{F}(\theta)\leq -\frac{\kappa}{2}\|\theta-\theta^*\|^2$ for all $\theta\in\mathbb{R}^d$.
\end{enumerate}
\end{proposition}

Similarly as in Section \ref{sec:sa}, given precision $\delta>0$, we define $t_\delta$ as the mixing time of the Markov chain $\{X_k\}$ with precision $\delta>0$. Observe that Proposition \ref{thm:apply-to-q-learning} (1) implies that there exists a constant $M_1=\frac{\log(C'/\rho')}{\log(1/\rho')}$ such that $t_\delta\leq M_1(\log(1/\delta)+1)$ for any $\delta>0$. This is analogous to Eq. (\ref{eq:mixing_time_bound}) in Section \ref{sec:sa}.

We next use Theorem \ref{thm:main} to establish the finite-sample bounds of the $Q$-learning algorithm (\ref{algorithm:Q-learning}). In the diminishing stepsize regime, we only present case (1) (c) of Corollary \ref{thm:diminishing_step_size}, which has the best convergence rate. Let $\eta_1=(\|\theta_0\|+\|\theta_0-\theta^*\|+1)^2$ and $\eta_2=130M^2(\|\theta^*\|+1)^2$. The following theorem is a direct implication of Theorem \ref{thm:main}, hence we omit its proof.

\begin{theorem}\label{thm:q-constant}
Consider $\{\theta_k\}$ of the $Q$-learning algorithm (\ref{algorithm:Q-learning}). Suppose that Assumptions \ref{as:q-markov-chain} and \ref{as:q-behhavior-policy} are satisfied, Then we have the following results.
\begin{enumerate}[(1)]
	\item When $\alpha_k\equiv\alpha$ with $\alpha$ chosen such that $\alpha t_\alpha\leq \frac{\kappa}{260M^2}$, we have for all $k\geq t_\alpha$:
	\begin{align*}
		\mathbb{E}[\|\theta_{k}-\theta^*\|^2]\leq \eta_1\left(1-\kappa\alpha/2\right)^{k-t_\alpha}+2\eta_2\alpha t_\alpha/\kappa.
	\end{align*}
\item When $\alpha_k=\alpha/(k+h)$, where $\alpha>2/\kappa$ and $h$ is large enough, there exists $K'>0$ such that we have for all $k\geq K'$:
\begin{align*}
	\mathbb{E}[\|\theta_k-\theta^*\|^2]\leq\eta_1\left(\frac{K'+h}{k+h}\right)^{\frac{\kappa\alpha}{2}}+\frac{16e\eta_2\alpha^2M_1}{\kappa\alpha-2} \frac{\left[\log\left(\frac{k+h}{\alpha}\right)+1\right]}{k+h}.
\end{align*}
\end{enumerate}
\end{theorem}

Theorem \ref{thm:q-constant} (1) is qualitatively similar to Corollary \ref{thm:constant_step_size} in that the iterates of $Q$-learning converge exponentially fast to a ball centered at $\theta^*$, and the size of the ball is proportional to $\alpha t_\alpha$. 
This agrees with results in \cite{srikant2019finite,bhandari2018finite}, where the popular TD-learning with linear function approximation algorithm was studied. Theorem \ref{thm:q-constant} (2) suggests that for properly chosen diminishing stepsizes, the optimal convergence rate is roughly $O(\log (k)/k)$. The $\log(k)$ factor is a consequence of performing Markovian sampling of $\{(S_k,A_k)\}$.

\subsection{Discussion about Assumption \ref{as:q-behhavior-policy} on the Behavior Policy}\label{subsec:RL:discussion}
In this section, we take a closer look at Assumption \ref{as:q-behhavior-policy} and especially Eq. (\ref{key}), which is made for the stability of the $Q$-learning with linear function approximation algorithm. First note that Eq. \eqref{key} is equivalent to
\begin{align}\label{con}
\gamma^2\mathbb{E}_{\mu_S}[\max_{a \in\mathcal{A}}\tilde{Q}_\theta(S,a)^2]<\mathbb{E}_{\mu_S,\pi}[\tilde{Q}_\theta(S,A)^2]
\end{align}
for all nonzero $\theta$.
The direction Eq. (\ref{key}) implying Eq. (\ref{con}) is trivial. As for the other direction, let
\begin{align*}
\kappa = -\max_{\theta:\|\theta\|=1}\{\gamma^2\mathbb{E}_{\mu_S}[\max_{a\in\mathcal{A}}\tilde{Q}_\theta(S,a)^2]-\mathbb{E}_{\mu_S,\pi}[\tilde{Q}_\theta(S,A)^2]\}.
\end{align*}
By Weierstrass extreme value theorem \cite{rudin1964principles}, $\kappa$ is well-defined and strictly positive because it is the maximum of a continuous function over a compact set. This immediately gives Eq. \eqref{key}.

Similar assumptions on the behavior policy were also proposed in \cite{melo2008analysis,lee2019unified}. Although the exact form of the conditions are different,  they all follow the same spirit. That is, with a chosen Lyapunov function, the condition should enable us to show that the corresponding ODE
\begin{align}\label{ODE-q-learning}
\dot{\theta}(t)=\bar{F}(\theta(t))
\end{align}
of the $Q$-learning algorithm (\ref{algorithm:Q-learning}) is globally asymptotically stable (GAS). We next briefly compare our condition to those proposed in \cite{melo2008analysis,lee2019unified}. The condition (Eq. (7)) in \cite{melo2008analysis}  implies
\begin{align}\label{eq:melo}
2\gamma^2\mathbb{E}_{\mu_S}[(\max_{a\in\mathcal{A}}\tilde{Q}_\theta(S,a))^2]<\mathbb{E}_{\mu_S,\pi}[\tilde{Q}_\theta(S,A)^2]
\end{align}
for all nonzero $\theta$ \footnote{The factor of 2 appears to be missing in \cite{melo2008analysis}.}. The RHS is the same for both Eqs. (\ref{eq:melo}) and (\ref{con}). On the LHS, Eq. (\ref{eq:melo}) has an additional factor of $2$, and the square is outside the max operator. Although they are similar, our condition and the condition proposed in \cite{melo2008analysis} do not imply each other.
As for the condition proposed in \cite{lee2019unified}, while it is not clear if it is less restrictive than ours, it is shown that the condition in \cite{lee2019unified} implies the condition in \cite{melo2008analysis} under more restrictive assumptions. However, \cite{lee2019unified} assumes i.i.d. sampling, and studies only the asymptotic convergence rather than finite-sample error bounds.

We next analyze how the discount factor, the basis vectors $\{\phi_i\}$, and the behavior policy $\pi$ impact condition (\ref{con}). In terms of the dependence on the discount factor, it is clear that condition (\ref{con}) is easier to satisfy for smaller discount factor. This agrees with our numerical simulations provided in Section \ref{subsec:RL:experiments}. Utility of smaller discount factors in RL was also noted
in \cite{jiang2015dependence}, albeit in a completely different context of generalization. To see the impact of the basis vectors and the behavior policy, consider the following two examples.

\textbf{Uni-Dimension Case.}
Suppose that $d=1$. That is, there is only one basis vector $\phi_1$, and the weight $\theta$ is a scalar. Condition (\ref{con}) reduces to
\begin{align}\label{1d-weaker}
	\gamma^2\mathbb{E}_{\mu_S}[\max_{a \in \mathcal{A}}\phi(S,a)^2]<\mathbb{E}_{\mu_S,\pi}[\phi(S,A)^2].
\end{align}
Define $h^+=\mathbb{E}_{\mu_S,\pi}[\gamma\phi(S,A)\max_{a' \in \mathcal{A}}\phi(S',a')-\phi(S,A)^2]$, $h^-=\mathbb{E}_{\mu_S,\pi}[\gamma\phi(S,A)\min_{a' \in \mathcal{A}}\phi(S',a')-\phi(S,A)^2]$, and $r_\pi=\mathbb{E}_{\mu_S,\pi}[\phi(S,A)\mathcal{R}(S,A)]$. Then we have the following result. See Section \ref{subsec:pf:thm:1d-iff} for the proof.

\begin{proposition}\label{thm:1d-iff}
	Eq. (\ref{1d-weaker}) implies $h^+<0$ and $h^-<0$, and the following statements regarding the relation between the stability of ODE (\ref{ODE-q-learning}) and the sign of $h^+$ and $h^-$ hold:
	\begin{align*}
		\text{ODE (\ref{ODE-q-learning}) is GAS}\Longleftrightarrow \begin{cases}
			h^+<0,h^-<0,&\text{when }r_{\pi}=0,\\
			h^+<0,h^-\leq 0,&\text{when }r_{\pi}>0,\\
			h^+\leq 0,h^-<0,&\text{when }r_{\pi}<0.
		\end{cases} 
	\end{align*}
\end{proposition}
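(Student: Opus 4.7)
My plan is to reduce the ODE to an explicit piecewise-linear scalar equation in $\theta$, derive the first claim from Cauchy-Schwarz, and then analyze the resulting ODE by elementary case splitting on the sign of $r_\pi$. Since $d=1$ we have $\phi(s,a)^\top\theta=\phi(s,a)\theta$, and for any fixed $s'$
\[
\max_{a'\in\mathcal{A}}\phi(s',a')\theta=\begin{cases}\theta\max_{a'}\phi(s',a'),&\theta\geq 0,\\ \theta\min_{a'}\phi(s',a'),&\theta\leq 0.\end{cases}
\]
Substituting into the definition of $\bar F$ and taking expectation with respect to $\mu$,
\[
\bar F(\theta)=\begin{cases}r_\pi+h^+\theta,&\theta\geq 0,\\ r_\pi+h^-\theta,&\theta\leq 0,\end{cases}
\]
with the two branches agreeing at $\theta=0$, so ODE (\ref{ODE-q-learning}) becomes a piecewise-linear scalar ODE.

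For the first claim, that Eq.~(\ref{1d-weaker}) forces $h^+<0$ and $h^-<0$, I would apply Cauchy--Schwarz. Since under the stationary distribution of $\{X_k\}$ the marginals of $s$ and $s'$ both equal $\mu$,
\[
\gamma\left|\mathbb{E}_\mu[\phi(s,a)\max_{a'}\phi(s',a')]\right|\leq\gamma\sqrt{\mathbb{E}_\mu[\phi(s,a)^2]}\sqrt{\mathbb{E}_\mu[\max_a\phi(s,a)^2]}<\mathbb{E}_\mu[\phi(s,a)^2],
\]
where the final strict inequality is exactly Eq.~(\ref{1d-weaker}); this gives $h^+<0$. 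The same bound applied with $\min_{a'}\phi(s',a')$, together with the elementary observation $(\min_{a'}\phi(s',a'))^2\leq\max_{a'}\phi(s',a')^2$, yields $h^-<0$.

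For the GAS iff statements I would split on the sign of $r_\pi$. When $r_\pi=0$ the ODE reduces to $\dot\theta=h^\pm\theta$ on the two half-lines with equilibrium $\theta^*=0$, so GAS holds iff $h^+<0$ and $h^-<0$. When $r_\pi>0$: on $\{\theta\geq 0\}$ the equation $\dot\theta=r_\pi+h^+\theta$ has a zero in $(0,\infty)$ exactly when $h^+<0$, in which case the unique equilibrium $-r_\pi/h^+$ is stable, while $h^+\geq 0$ sends trajectories to $+\infty$; on $\{\theta\leq 0\}$, if $h^-\leq 0$ then $\dot\theta\geq r_\pi>0$, so every trajectory enters $\{\theta\geq 0\}$ in finite time and is then captured by the stable positive equilibrium, whereas $h^->0$ produces an unstable equilibrium $-r_\pi/h^-<0$ from which trajectories further to the left diverge to $-\infty$. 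The case $r_\pi<0$ then follows from the reflection $\theta\mapsto-\theta$, which swaps the roles of $h^+$ and $h^-$ and flips the sign of $r_\pi$.

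The main obstacle is the boundary case $h^-=0$ with $r_\pi>0$ (symmetrically $h^+=0$ with $r_\pi<0$): on the ``wrong'' half-line, convergence does not follow from linear stability because that branch has no equilibrium and the dynamics are just $\dot\theta=r_\pi$. This is resolved by explicit integration, $\theta(t)=\theta(0)+r_\pi t$, which reaches zero at finite time $-\theta(0)/r_\pi$, after which the stable equilibrium on the opposite branch governs the long-term behavior. All other steps are routine one-dimensional linear ODE analysis.
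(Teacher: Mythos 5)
Your proposal is correct and follows essentially the same route as the paper: reduce $\bar F$ to the piecewise-linear scalar form $\dot\theta = h^{\pm}\theta + r_\pi$, obtain $h^+<0$ and $h^-<0$ from Cauchy--Schwarz combined with the pointwise bounds $(\max_{a'}\phi(s',a'))^2,(\min_{a'}\phi(s',a'))^2\leq\max_{a'}\phi(s',a')^2$ and the stationarity of $s'$, and then split on the sign of $r_\pi$, with the $r_\pi<0$ case handled by symmetry. The only substantive difference is in the sufficiency direction: the paper introduces the quadratic Lyapunov function $V(\theta)=\frac{1}{2}(\theta-\theta^*)^2$ and checks $\dot V<0$ off the equilibrium (which makes the boundary case $h^-=0$, $r_\pi>0$ fall out of the same sign computation, since $h^-\theta(t)\geq 0$ on $\theta(t)<0$), whereas you argue by direct phase-line monotonicity and explicit integration, treating that boundary case separately via $\theta(t)=\theta(0)+r_\pi t$. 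Both are sound; your version is slightly more elementary and self-contained, while the Lyapunov version packages all subcases into one inequality and matches the stability machinery used elsewhere in the paper. The necessity arguments (escape to $\pm\infty$ from suitably chosen initial conditions when the sign conditions fail) coincide.
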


Proposition \ref{thm:1d-iff} implies that the condition (\ref{1d-weaker}) is ``almost necessary''  for the GAS of ODE (\ref{ODE-q-learning}). Moreover, it is clear from Eq. (\ref{1d-weaker}) that when $d=1$, there always exists a behavior policy $\pi$ such that Eq. (\ref{1d-weaker}) is satisfied. For example, $\pi(s)\in\arg\max_{a \in\mathcal{A}}\phi(s,a)^2$ is a feasible behavior policy.

\textbf{Full-Dimension Case.} Suppose that $d=mn$, i.e., there is no dimension reduction at all. We want to emphasize that this is not equivalent to the tabular $Q$-learning. Even when $\Phi$ is a full-rank square matrix, the $Q$-learning with linear function approximation algorithm does not coincide with the tabular $Q$-learning algorithm. In fact, the divergence counter-example provided in \cite{baird1995residual} belongs to this setting. We show in the following proposition that, in the full-dimension case, condition (\ref{con}) is feasible in terms of the behavior policy $\pi$ only when the discount factor $\gamma$ is sufficiently small. See Section \ref{pf:thm:full-dimension} for its proof.
\begin{proposition}\label{thm:full-dimension}
	When $d=mn$ and $\gamma^2\geq 1/m$, condition (\ref{con}) is infeasible for any behavior policy $\pi$.
\end{proposition}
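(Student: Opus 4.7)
The plan is to exploit the fact that in the full-dimension setting Assumption \ref{as:q-feature} forces the feature matrix $\Phi\in\mathbb{R}^{|\mathcal{S}||\mathcal{A}|\times d}$ to be square with linearly independent columns, hence invertible. Therefore the map $\theta\mapsto \Phi\theta$ is a bijection, and the collection of scalars $\{\phi(s,a)^\top\theta\}_{(s,a)\in\mathcal{S}\times\mathcal{A}}$ can be prescribed arbitrarily. Consequently, condition (\ref{con}) is equivalent to requiring, for every non-zero function $u:\mathcal{S}\times\mathcal{A}\to\mathbb{R}$,
\begin{align*}
\gamma^2\,\mathbb{E}_\mu\!\left[\max_{a'\in\mathcal{A}} u(s,a')^2\right]<\mathbb{E}_\mu\!\left[u(s,a)^2\right].
\end{align*}
This reduces the claim to a statement that depends only on $\mu$ (and the cardinality of $\mathcal{A}$), not on the particular features.

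The next step is to produce a specific $u$ that defeats the inequality. I will take $u(s,a):=\mathbbm{1}\{a=a^*\}$ for some fixed $a^*\in\mathcal{A}$. With this choice, $\max_{a'\in\mathcal{A}} u(s,a')^2=1$ for every state $s$, so the left-hand side collapses to $\gamma^2$, while the right-hand side equals the marginal $\nu(a^*):=\sum_{s}\mu(s,a^*)$ of action $a^*$ under the stationary distribution of $\{(s_k,a_k)\}$.

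The argument then concludes with a pigeonhole step: because $\sum_{a\in\mathcal{A}}\nu(a)=1$ and $|\mathcal{A}|=m$, there must exist some action $a^*$ with $\nu(a^*)\le 1/m$. For this $a^*$ the reduced inequality becomes $\gamma^2<\nu(a^*)\le 1/m$, which is violated as soon as $\gamma^2\ge 1/m$. Since this $u$ is non-zero and lifts via $\Phi^{-1}$ to a non-zero $\theta$, condition (\ref{con}) fails for every behavior policy $\pi$ in this regime, giving the infeasibility claim.

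I do not anticipate a serious obstacle. The only subtlety is the initial reduction: one must observe that, in the full-dimension case, feasibility of (\ref{con}) does not depend on the specific choice of the (invertible) feature matrix, which is exactly what allows us to test (\ref{con}) on the convenient indicator function $u$ rather than on a function from the span of some particular features.
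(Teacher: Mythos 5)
Your proof is correct and follows essentially the same route as the paper's: both exploit the invertibility of the square feature matrix $\Phi$ in the full-dimensional case to test condition (\ref{con}) on an indicator-type function and then conclude by pigeonhole over the $m$ actions. The only difference is the choice of test function --- the paper takes $\theta$ orthogonal to all but one feature, i.e., the indicator of a single state-action pair, which yields the slightly stronger necessary condition $\gamma^2<\min_{(s,a)}\pi(a|s)$, whereas your single-action indicator yields the averaged condition $\gamma^2<\min_{a}\sum_s\mu(s)\pi(a|s)$; both quantities are at most $1/m$, so both establish the proposition.
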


We now compare the results for the two extreme cases, i.e., $d=1$ and $d=mn$. We see that in the uni-dimensional case, Eq. (\ref{con}) implies a condition which is almost sufficient and necessary for the GAS of the equilibrium $\theta^*$ to ODE (\ref{ODE-q-learning}). Moreover, there always exists a behavior policy $\pi$ satisfying $(\ref{con})$. However, in the full-dimensional case, condition (\ref{con}) is infeasible in terms of the behavior policy $\pi$ when $\gamma^2\geq 1/m$, which can usually happen in practice.

\subsection{Numerical Experiments}\label{subsec:RL:experiments}
In this section, we present numerical experiments to demonstrate the sufficiency of condition (\ref{con}), as well as the resulting convergence rates of the $Q$-learning with linear function approximation algorithm.

We begin by verifying the sufficiency of condition (\ref{key}). Let
\begin{align}\label{key2}
\omega(\pi)=\min_{\{\theta:\|\theta\|=1\}}\frac{\mathbb{E}_{\mu_S,\pi}[\tilde{Q}_\theta(S,A)^2]}{\mathbb{E}_{\mu_S}[\max_{a \in \mathcal{A}}\tilde{Q}_\theta(S,a)^2]}.
\end{align}
Then condition (\ref{key}) is equivalent to $\omega(\pi)>\gamma^2$. One way to compute $\omega(\pi)$ is presented in Section \ref{computing_omega}.

In our simulation, we consider the divergent example of $Q$-learning with linear function approximation introduced in \cite{baird1995residual}, which is an MDP with  $7$ states and $2$ actions. 
To demonstrate the effectiveness of condition (\ref{key}) for the stability of $Q$-learning, in our first set of simulations, the reward function is set to zero.  Since the reward function is identically zero, $Q^*$ is zero, implying $\theta^*$ is zero. We choose the behavior policy $\pi$ which takes each action with equal probability. In this case, we have $\omega(\pi)\approx 0.5$, giving the threshold for $\gamma$ to satisfy Eq. (\ref{key}) being $\omega(\pi)^{1/2}\approx 0.7$. In our simulation, we choose constant stepsize $\alpha=0.01$,  discount factor $\gamma \in \{0.7,0.9,0.97\}$, and plot $\|\theta_k\|$  as a function of the number of iterations $k$ in Fig. \ref{fig:1}. Here, $\theta_{k}$ converges when $\gamma=0.7,0.9$,  but diverges when $\gamma=0.97$. This demonstrates that condition \eqref{key} is sufficient but not necessary for convergence. This also shows that when Eq. \eqref{key} is satisfied, the counter-example from \cite{baird1995residual} converges.

\begin{figure}[h]
\centering
\begin{minipage}{.40\textwidth}
	\centering
	\includegraphics[width=\linewidth]{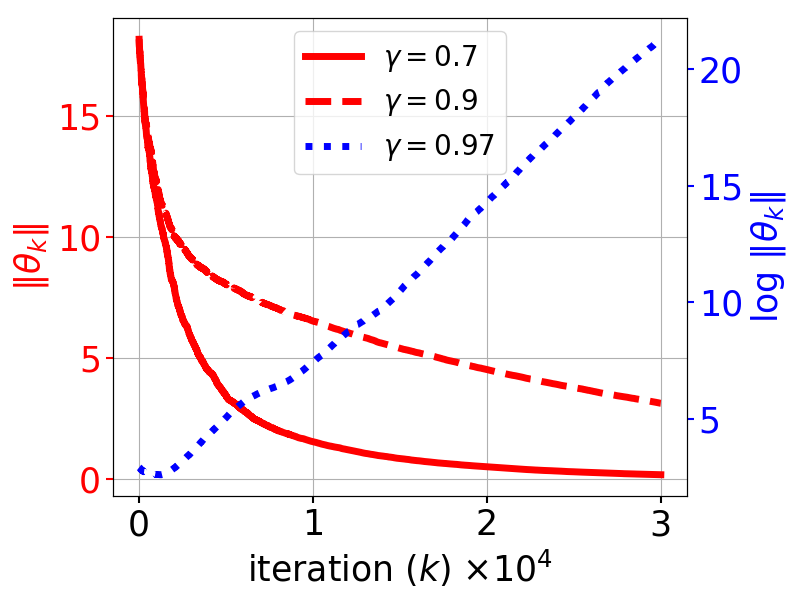}
	\caption{{\small Convergence of $Q$-learning with linear function approximation for different discount factor $\gamma$}}
	\label{fig:1}
\end{minipage}
\hfill
\begin{minipage}{.40\textwidth}
	\centering
	\includegraphics[width=\linewidth]{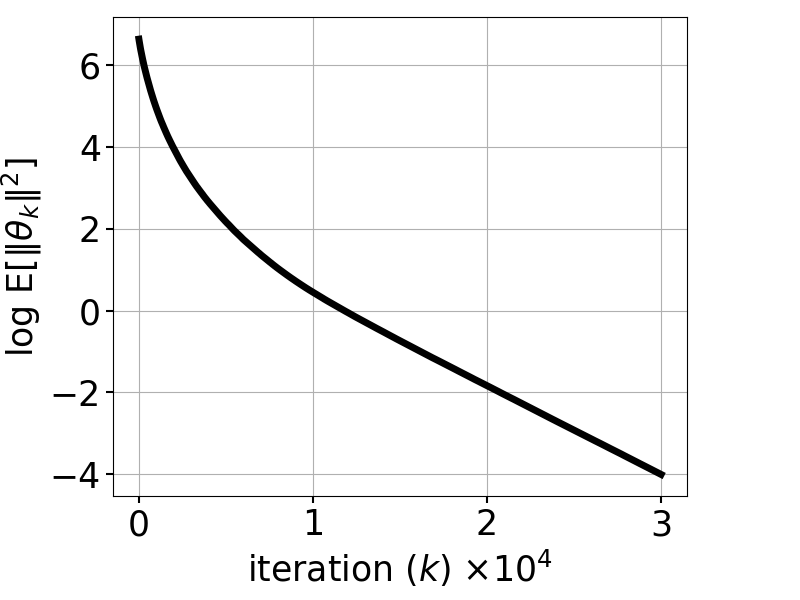}
	\caption{{\small Exponentially fast convergence of $Q$-learning with linear function approximation for $\gamma=0.7$}}
	\label{fig:2}
\end{minipage}
\end{figure}

To show the exponential convergence rate for using constant stepsize,  we consider the convergence of $\theta_{k}$ when $\gamma=0.7$ given in Fig. \ref{fig:2}, where we plot $\log{\mathbb{E}[\|\theta_k\|^2]}$ as a function of the number of iterations $k$. In this case, $\theta_{k}$ seems to converge geometrically, which agrees with Theorem \ref{thm:q-constant} (1).

We next numerically verify the convergence rates of $Q$-learning with linear function approximation for using diminishing stepsizes $\alpha_k=\alpha/(k+h)^\xi$. We use the same MDP model and behavior policy. The only difference is that the reward is no longer set to zero, but is sampled independently from a uniform distribution on $(0,1)$ for all state-action pairs. The constant $\kappa$ given in Eq. (\ref{key}) is estimated by numerical optimization, and the discount factor $\gamma$ is set to be $0.7$ to ensure convergence. In Fig. \ref{fig:3}, we plot $\mathbb{E}[\|\theta_k-\theta^*\|^2]$ as a function of $k$ for $\xi\in \{0.4,0.6,0.8,1\}$. In the case where $\xi=1$, the constant coefficient $\alpha$ is chosen such that $\kappa\alpha\geq 2$ in order to achieve the optimal  convergence rate. We see that the iterates converge for all $\xi\in(0,1]$. Moreover, the larger the value of $\xi$ is, the faster $\theta_k$ converges. 

\begin{figure}[h]
\centering
\begin{minipage}{.40\textwidth}
	\centering
	\includegraphics[width=\linewidth]{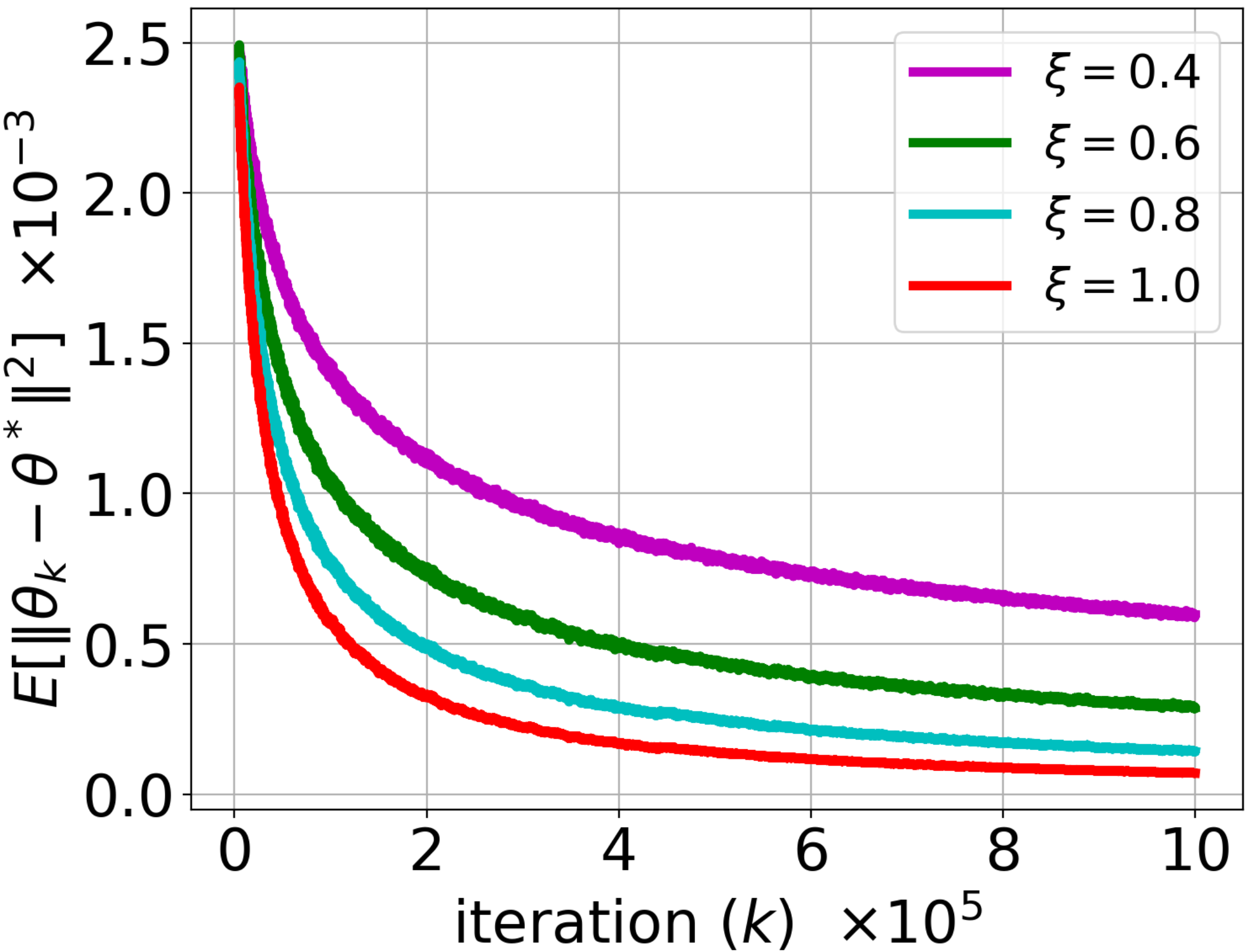}
	\caption{{\small Convergence for diminishing stepsizes}}
	\label{fig:3}
\end{minipage}
\hfill
\begin{minipage}{.40\textwidth}
	\centering
	\includegraphics[width=\linewidth]{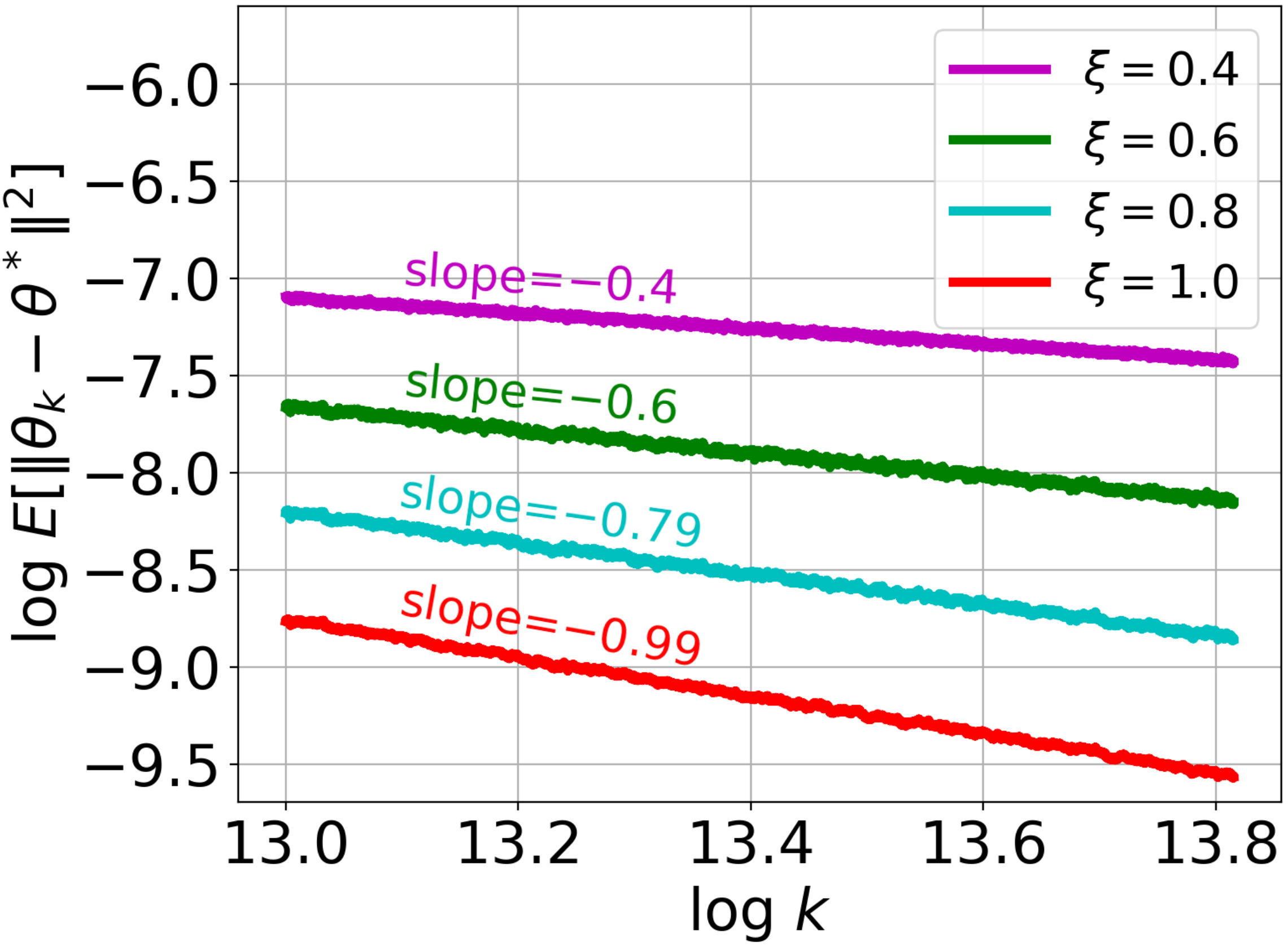}
	\caption{{\small Asymptotic convergence rate for diminishing stepsizes}}
	\label{fig:4}
\end{minipage}
\end{figure}

To further verify the convergence rates, we plot $\log\mathbb{E}[\|\theta_k-\theta^*\|^2]$ as a function of $\log k$ in Fig. \ref{fig:4} and look at its asymptotic behavior. We see that the slope is approximately $-\xi$, which agrees with Theorem \ref{thm:q-constant} (2). 

In addition to the MDP used in Baird's counter-example \cite{baird1995residual}, numerical simulations corresponding to a larger MDP are presented in Appendix \ref{ap:larger}, and the results are consistent with the theory as well as the outcomes of the simulations in this section.

\section{Conclusion}
In this paper we establish finite-sample convergence guarantees for a general nonlinear SA algorithm with Markovian noise. We then use it to derive, for the first time, finite-sample bounds for the $Q$-learning with linear function approximation algorithm. Since such an algorithm is known to diverge in general, we study it under a condition on the basis vectors, the behavior policy, and the discount factor that ensure stability. Sufficiency of this condition and the rate of convergence of $Q$-learning are verified numerically in the context of a well-known example.

\bibliographystyle{imsart-number}
\bibliography{references}    

\newpage

\begin{appendix}
\section{Proof of All Technical Results in Section \ref{sec:sa}}

\subsection{Proof of Corollary \ref{thm:constant_step_size}}\label{pf:thm:constant_step_size}
When $\alpha_k=\alpha$ for all $k\geq0$, since $K=t_\alpha$, we have (1) $\prod_{j=K}^{k-1}(1-c_0\alpha_j)= (1-c_0 \alpha)^{k-t_\alpha}$, and (2) $\sum_{i=K}^{k-1}\hat{\alpha}_i\prod_{j=i+1}^{k-1}(1-c_0\alpha_j)=\alpha^2t_\alpha \sum_{i=t_\alpha}^{k-1}(1-c_0\alpha)^{k-i-1}\leq \frac{\alpha t_\alpha}{c_0}$. This proves the result.

{\subsection{Proof of Corollary \ref{thm:diminishing_step_size}}\label{pf:thm:diminishing_step_size}
We first verify Condition \ref{as:stepsize}. When $\alpha_k=\alpha/(k+h)^\xi$, using Eq. (\ref{eq:mixing_time_bound}) and we have
\begin{align*}
	t_k\leq L_3(\log(1/\alpha_k)+1)=L_3(\xi\log(k+h)+\log(1/\alpha)).
\end{align*}
It follows that
\begin{align*}
	\alpha_{k-t_k,k-1}&\leq t_k\alpha_{k-t_k}\\
	&\leq L_3(\log(1/\alpha_k)+1)\frac{\alpha}{(k-t_k+h)^\xi}\\
	&\leq \frac{\alpha L_3(\log(1/\alpha_k)+1)}{(k-L_3(\log(1/\alpha_k)+1)+h)^\xi}\\
	&= \frac{\alpha L_3(\log(1/\alpha_k)+1)}{(k-L_3(\xi\log(k+h)+\log(1/\alpha))+1)+h)^\xi}\\
	&= \frac{\alpha_kL_3(\log(1/\alpha_k)+1)(k+h)^\xi }{(k-L_3(\xi\log(k+h)+\log(1/\alpha))+1)+h)^\xi},
\end{align*}
where the last line follows from multiplying $\alpha_k$ and dividing $\frac{\alpha}{(k+h)^\xi}$. Therefore, we have
\begin{align*}
	\lim_{(k+h)\rightarrow\infty}\frac{\alpha_{k-t_k,k-1}}{\alpha_kL_3(\log(1/\alpha_k)+1)}=1,
\end{align*}
which implies that there exists $\bar{h}_1=\bar{h}_1(\alpha,\xi)>0$ such that $\alpha_{k-t_k,k-1}\leq 2L_3 \alpha_k(\log(1/\alpha_k)+1)$ for all $k\geq 0$. Since we also have $\lim_{(k+h)\rightarrow\infty}\alpha_k(\log(1/\alpha_k)+1)=0$,
there exists $\bar{h}_2=\bar{h}_2(\alpha,\xi)>0$ such that $\alpha_{k-t_k,k-1}\leq \frac{c_0}{130L^2}$ for all $k\geq t_k$. Let $\bar{h}=\max(\bar{h}_1,\bar{h}_2)$. Then when $h\geq \bar{h}$, Condition \ref{as:stepsize} is satisfied for any $k\geq 0$. Moreover, we have in this case $\alpha_{k-t_k,k-1}\leq 2L_3 \alpha_k(\log(1/\alpha_k)+1)$ for any $k\geq 0$. This is useful for us to derive the explicit convergence rate in the following.

To prove Corollary \ref{thm:diminishing_step_size}, we begin by simplifying Eq. (\ref{eq:sa-bound}) of Theorem \ref{thm:main}. For $k\geq K$, we have
\begin{align}
	\mathbb{E}[\|\theta_{k}-\theta^*\|^2]
	\leq \;&\beta_1\prod_{j=K}^{k-1}(1-c_0 \alpha_j)+\beta_2\sum_{i=K}^{k-1}\alpha_i\alpha_{i-k_i,i-1}\prod_{j=i+1}^{k-1}(1-c_0\alpha_j)\nonumber\\
	\leq \;&\beta_1\prod_{j=K}^{k-1}(1-c_0 \alpha_j)+\beta_2 \sum_{i=K}^{k-1}2L_3\alpha_i^2(\log(1/\alpha_i)+1)\prod_{j=i+1}^{k-1}(1-c_0\alpha_j)\nonumber\\
	\leq \;&\beta_1\prod_{j=K}^{k-1}(1-c_0\alpha_j)+2L_3\beta_2(\log(1/\alpha_k)+1) \sum_{i=K}^{k-1}\alpha_i^2\prod_{j=i+1}^{k-1}(1-c_0\alpha_j)\nonumber\\
	\leq \;&\beta_1\underbrace{\prod_{j=K}^{k-1}(1-c_0\alpha_j)}_{A_1}+2L_3\beta_2\left(\log\left(\frac{k+h}{\alpha}\right)+1\right)\underbrace{\sum_{i=K}^{k-1}\alpha_i^2\prod_{j=i+1}^{k-1}(1-c_0\alpha_j)}_{A_2}.\label{eq:sa-rate}
\end{align}
We next bound the terms $A_1$ and $A_2$. For the term $A_1$, we have
\begin{align}\label{eq:A_1}
	A_1&=\prod_{j=K}^{k-1}\left(1-\frac{c_0\alpha}{(j+h)^\xi}\right)\nonumber\\
	&\leq \exp\left(-c_0\alpha\sum_{j=K}^{k-1}\frac{1}{(j+h)^\xi}\right)\nonumber\\
	&\leq \exp\left(-c_0\alpha\int_{K}^{k}\frac{1}{(x+h)^\xi}dx\right)\nonumber\\
	&\leq \begin{dcases}
		\left(\frac{K+h}{k+h}\right)^{c_0\alpha},&\xi=1,\\
		e^{-\frac{c_0\alpha}{1-\xi}\left((k+h)^{1-\xi}-(K+h)^{1-\xi}\right)},&\xi\in (0,1).
	\end{dcases}
\end{align}
We next consider the term $A_2$. When $\xi=1$, using the same line of analysis for evaluating $A_1$, we have
\begin{align*}
	A_2&=\sum_{i=K}^{k-1}\frac{\alpha^2}{(i+h)^2}\prod_{j=i+1}^{k-1}\left(1-\frac{c_0\alpha}{j+h}\right)\\
	&\leq  \sum_{i=K}^{k-1}\frac{\alpha^2}{(i+h)^{2}}\left(\frac{i+1+h}{k+h}\right)^{c_0\alpha}\tag{Same to Eq. (\ref{eq:A_1})}\\
	&=\frac{\alpha^2}{(k+h)^{c_0\alpha}}\sum_{i=K}^{k-1}\left(\frac{i+1+h}{i+h}\right)^2\frac{1}{(i+1+h)^{2-c_0\alpha}}\\
	&=\frac{\alpha^2}{(k+h)^{c_0\alpha}}\sum_{i=K}^{k-1}\left(1+\frac{1}{i+h}\right)^2\frac{1}{(i+1+h)^{2-c_0\alpha}}\\
	&\leq \frac{4\alpha^2}{(k+h)^{c_0\alpha}}\underbrace{\sum_{i=K}^{k-1}\frac{1}{(i+1+h)^{2-c_0\alpha}}}_{A_3}.
\end{align*}
The upper bound for the term $A_3$ depends on the value $c_0\alpha$. Evaluate $A_3$ for $\alpha\in (0,1/c_0)$, $\alpha=1/c_0$, and $\alpha\in (1/c_0,\infty)$, and we have
\begin{align*}
	A_3\leq \begin{dcases}
		\frac{1}{1-c_0\alpha},&\alpha\in (0,1/c_0),\\
		\log (k+h),&\alpha=1/c_0,\\
		\frac{e (k+h)^{c_0\alpha-1}}{c_0\alpha-1},&\alpha\in (1/c_0,\infty).
	\end{dcases}
\end{align*}
It follows that
\begin{align}\label{eq:A_22}
	A_2\leq \begin{dcases}
		\frac{4\alpha^2}{(1-c_0\alpha)(k+h)^{c_0\alpha}},&\alpha\in (0,1/c_0),\\
		\frac{4\alpha^2\log(k+h)}{k+h},&\alpha=1/c_0,\\
		\frac{4e\alpha^2}{(c_0\alpha-1)(k+h)},&\alpha\in (1/c_0,\infty).
	\end{dcases}
\end{align}
Substituting the upper bounds for the terms $A_1$ (cf. Eq. (\ref{eq:A_1})) and $A_2$ (cf. Eq. (\ref{eq:A_22})) into Eq. (\ref{eq:sa-rate}) proves Corollary \ref{thm:diminishing_step_size} (1).

Now consider the case where $\xi\in (0,1)$.
Let $\{u_k\}_{k\geq K}$ be a sequence defined by
\begin{align*}
	u_{k+1}=\left(1-\frac{c_0\alpha}{(k+h)^\xi}\right)u_k+\frac{\alpha^2}{(k+h)^{2\xi}},\quad u_{K}=0.
\end{align*}
It is easy to verify that $u_k=A_2$. We next use induction on $u_k$ to show that 
\begin{align}\label{eq:A_2}
	u_k\leq \frac{2\alpha}{c_0}\frac{1}{(k+h)^\xi}.
\end{align}
Since $u_{K}=0\leq \frac{2\alpha}{c_0}\frac{1}{(K+h)^\xi}$, we have the base case. Now suppose $u_k\leq \frac{2\alpha}{c_0}\frac{1}{(k+h)^\xi}$ for some $k\geq K$. Consider the difference between $\frac{2\alpha}{c_0}\frac{1}{(k+1+h)^\xi}$ and $u_{k+1}$. We have
\begin{align}
	\frac{2\alpha}{c_0}\frac{1}{(k+1+h)^\xi}-u_{k+1}
	\geq\;& \frac{2\alpha}{c_0}\frac{1}{(k+1+h)^\xi}-\left(1-\frac{c_0\alpha}{(k+h)^\xi}\right)\frac{2\alpha}{c_0}\frac{1}{(k+h)^\xi}-\frac{\alpha^2}{(k+h)^{2\xi}}\nonumber\\
	=\;&\frac{2\alpha}{c_0}\frac{1}{(k+h)^{2\xi}}\left[\frac{c_0\alpha}{2}-(k+h)^\xi\left(1-\left(\frac{k+h}{k+1+h}\right)^\xi\right)\right]\nonumber\\
	\geq\;& \frac{2}{c_0\alpha}\frac{1}{(k+h)^{2\xi}}\left[\frac{c_0\alpha}{2}-\frac{\xi}{(k+h)^{1-\xi}}\right]\label{eq:100}\\
	\geq\;& 0\label{eq:101},
\end{align}
where Eq. (\ref{eq:100}) follows from 
\begin{align*}
	\left(\frac{k+h}{k+1+h}\right)^\xi&=\left[\left(1+\frac{1}{k+h}\right)^{k+h}\right]^{-\xi/(k+h)}\\
	&\geq e^{-\xi/(k+h)}\\
	&\geq 1-\frac{\xi}{k+h},
\end{align*}
and Eq. (\ref{eq:101}) follows from $k\geq K\geq [2\xi/(c_0\alpha)]^{1/(1-\xi)}$.

Substituting the upper bounds for the terms $A_1$ (cf. Eq. (\ref{eq:A_1})) and $A_2$ (cf. Eq. (\ref{eq:A_2})) into Eq. (\ref{eq:sa-rate}) proves Corollary \ref{thm:diminishing_step_size} (2).

\subsection{Proof of Lemma \ref{le_1}}\label{pf:le_1}
For all $k\geq t_k$, we have
\begin{align*}
	\mathbb{E}_k[\| F(X_k,\theta_k)+w_k\|^2]
	\leq \;&\mathbb{E}_k[(\| F(X_k,\theta_k)\|+\|w_k\|)^2]\tag{Triangle inequality}\\
	\leq \;&\mathbb{E}_k[(L_1+L_2)^2(\|\theta_k\|+1)^2]\tag{Eq. (\ref{affine_growth_rate-F}) and Assumption \ref{as:markov-chain} (2)}\\
	\leq \;&L^2\mathbb{E}_k[(\|\theta_k-\theta^*\|+\|\theta^*\|+1)^2]\tag{$L=L_1+L_2$ and Triangle inequality}\\ \leq \;&2L^2\mathbb{E}_k[\|\theta_k-\theta^*\|^2+(\|\theta^*\|+1)^2],
\end{align*}
where the last line follows from $(p+q)^2\leq 2(p^2+q^2)$ for any $p,q\in\mathbb{R}$.

\subsection{Proof of Lemma \ref{le:mixing_time}}\label{pf:le:mixing}
We first recall an equivalent formula of computing the total variation distance \cite{charalambous2014extremum}.

\begin{lemma}\label{le:tv}
	Let $\mu_1$ and $\mu_2$ be two probability measures on $(\Omega,\mathcal{F})$. Let $p_1$,$p_2$ be the Radon-Nikodym derivatives of $\mu_1$ and $\mu_2$ w.r.t. some base probability measure $\nu$ (one can choose $\nu=\frac{\mu_1+\mu_2}{2}$). Then,
	\begin{align*}
		\|\mu_1-\mu_2\|_{\text{TV}}=\frac{1}{2}\int_{\Omega}|p_1-p_2|d\nu.
	\end{align*}
\end{lemma}

We next proceed to prove Lemma \ref{le:mixing_time}. For any given state $x\in\mathcal{X}$, let $p^k_x$ and $q_X$ be the Radon-Nikodym derivatives of the probability measures $P^k(x,\cdot)$ and $\mu_X(\cdot)$ with respect to some base probability measure $\nu$. Use the definition of mixing time (cf. Definition \ref{def:mixing_time}) and Lemma \ref{le:tv}, and we have for all $x$, $\theta$, and $k\geq t_\delta$ that
\begin{align*}
	\|\mathbb{E}[F(X_k,\theta)\mid X_0=x]-\bar{F}(\theta)\|
	=\;&\left\|\int_{\mathcal{X}}F(y,\theta)P^k(x,d(y))-\int_{\mathcal{X}}F(y,\theta)\mu_X(dy))\right\|\\
	=\;&\left\|\int_{\mathcal{X}}F(y,\theta)(p_x^k-q_X)d\nu\right\|\\
	\leq \;&\int_{\mathcal{X}}\|F(y,\theta)\||p_x^k-q_X|d\nu\tag{Jensen's inequality}\\
	\leq \;&L_1(\|\theta\|+1)\int_{\mathcal{X}}|p_x^k-q_X|d\nu\tag{Eq. (\ref{affine_growth_rate-F})}\\
	=\;&L_1(\|\theta\|+1)2\|P^k(x,\cdot)-\mu_X(\cdot)\|_{\text{TV}}\tag{Lemma \ref{le:tv}}\\
	\leq \;&2L_1(\|\theta\|+1)C\rho^k\tag{Assumption \ref{as:markov-chain}}\\
	\leq \;&2L_1(\|\theta\|+1)\delta\tag{$k\geq t_\delta$ and Definition \ref{def:mixing_time}}
\end{align*}

\subsection{Proof of Lemma \ref{le_2}}\label{pf:le_2}
Given $k_1<k_2$, we first upper bound $\|\theta_t\|$ for any $t\in[k_1,k_2]$.  Using Eq. (\ref{affine_growth_rate-F}) and Assumption \ref{as:markov-chain} (2), we have
\begin{align}
	\|\theta_{t+1}\|-\|\theta_t\|&\leq\|\theta_{t+1}-\theta_{t}\|\nonumber\\
	&=\alpha_k\|F(X_k,\theta_k)+w_k\|\nonumber\\
	&\leq (L_1+L_2)\alpha_k(\|\theta\|+1)\nonumber\\
	&\leq L\alpha_t(\|\theta_t\|+1),\label{2}
\end{align}
which gives $(\|\theta_{t+1}\|+1)\leq(L\alpha_t+1)(\|\theta_t\|+1)$. Recursively applying the preceding inequality, then using the fact that $1+x\leq e^x$ for all $x\in\mathbb{R}$, we have for all $t\in [k_1,k_2]$:
\begin{align*}
	\|\theta_{t}\|+1&\leq \prod_{j=k_1}^{t-1}(L\alpha_j+1)(\|\theta_{k_1}\|+1)\\
	&\leq \exp(L\alpha_{k_1,k_2-1})(\|\theta_{k_1}\|+1).
\end{align*}
Since $e^x\leq 1+2x$ for all $x\in [0,1/2]$ and $\alpha_{k_1,k_2-1}\leq \frac{1}{4L}$, we further obtain
\begin{align*}
	\|\theta_{t}\|+1\leq (1+2L\alpha_{k_1,k_2-1})(\|\theta_{k_1}\|+1)\leq  2(\|\theta_{k_1}\|+1).
\end{align*}
It follows from the previous inequality and Eq. (\ref{2}) that
\begin{align*}
	\|\theta_{k_2}-\theta_{k_1}\|\leq \sum_{t=k_1}^{k_2-1}\|\theta_{t+1}-\theta_t\|
	\leq 2L\alpha_{k_1,k_2-1}(\|\theta_{k_1}\|+1).
\end{align*}
Since $\alpha_{k_1,k_2-1}\leq \frac{1}{4L}$ and
\begin{align*}
	\|\theta_{k_2}-\theta_{k_1}\|&\leq
	2L\alpha_{k_1,k_2-1}(\|\theta_{k_1}\|+1)\\
	&\leq 2L\alpha_{k_1,k_2-1}(\|\theta_{k_2}-\theta_{k_1}\|+\|\theta_{k_2}\|+1)\\
	&\leq \frac{1}{2}\|\theta_{k_2}-\theta_{k_1}\|+2L\alpha_{k_1,k_2-1}(\|\theta_{k_2}\|+1),
\end{align*}
we have by rearranging terms that 
\begin{align*}
	\|\theta_{k_2}-\theta_{k_1}\|\leq 4L\alpha_{k_1,k_2-1}(\|\theta_{k_2}\|+1).
\end{align*}

\subsection{Proof of Lemma \ref{le_3}}\label{pf:le_3}
We begin by decomposing the following term on the LHS of the desired inequality:
\begin{align*}
	\mathbb{E}_k[(\theta_k-\theta^*)^\top(F(X_k,\theta_k)-\bar{F}(\theta_k))]
	=\;&\underbrace{\mathbb{E}_k[(\theta_k-\theta_{k-t_k})^\top(F(X_k,\theta_k)-\bar{F}(\theta_k))]}_{(T_1)}\\
	&+\underbrace{(\theta_{k-t_k}-\theta^*)^\top(\mathbb{E}_k[F(X_k,\theta_{k-t_k})]-\bar{F}(\theta_{k-t_k}))}_{(T_2)}\\
	&+\underbrace{\mathbb{E}_k[(\theta_{k-t_k}-\theta^*)^\top(F(X_k,\theta_k)-F(X_k,\theta_{k-t_k}))}_{T_3}\\
	&+\underbrace{\mathbb{E}_k[(\theta_{k-t_k}-\theta^*)^\top(\bar{F}(\theta_{k-t_k})-\bar{F}(\theta_k))]}_{(T_4)}.
\end{align*}
Consider the term $(T_1)$. Since $\alpha_{k-t_k,k-1}\leq \frac{1}{4L}$, Lemma \ref{le_2} is applicable for $k_1=k-t_k$ and $k_2=k$. Therefore, we have:
\begin{align}
	(T_1)
	=\;&\mathbb{E}_k[(\theta_k-\theta_{k-t_k})^\top(F(X_k,\theta_k)-\bar{F}(\theta_k))]\nonumber\\
	\leq\; &\mathbb{E}_k[\|\theta_k-\theta_{k-t_k}\|\|F(X_k,\theta_k)-\bar{F}(\theta_k)\|]\nonumber\\
	\leq\; &\mathbb{E}_k[\|\theta_k-\theta_{k-t_k}\|(\|F(X_k,\theta_k)\|+\|\bar{F}(\theta_k)\|)]\nonumber\\
	\leq\; &2L\mathbb{E}_k[\|\theta_k-\theta_{k-t_k}\|(\|\theta_k\|+1)]\nonumber\\
	\leq\;&8L^2\alpha_{k-t_k,k-1}\mathbb{E}_k\left[(\|\theta_k\|+1)^2\right]\tag{Lemma \ref{le_2}}\\
	\leq\;&8L^2\alpha_{k-t_k,k-1}\mathbb{E}_k\left[(\|\theta_k-\theta^*\|+\|\theta^*\|+1)^2\right]\nonumber\\
	\leq\; &16L^2\alpha_{k-t_k,k-1}(\mathbb{E}_k\left[\|\theta_k-\theta^*\|^2\right]+(\|\theta^*\|+1)^2)\label{31}.
\end{align}	
Now consider the term ($T_2$). Since Lemma \ref{le:mixing_time} implies that
\begin{align*}
	\|\mathbb{E}_k[F(X_k,\theta_{k-t_k})]-\bar{F}(\theta_{k-t_k})\|
	\leq 2\alpha_kL (\|\theta_{k-t_k}\|+1),
\end{align*}
we have by Cauchy-Schwarz inequality that
\begin{align*}
	(T_2)
	\leq2\alpha_kL\mathbb{E}_k[(\|\theta_{k-t_k}\|+1)\|\theta_{k-t_k}-\theta^*\|].
\end{align*}
To further control $(T_2)$, using Lemma \ref{le_2} together with our assumption that $\alpha_{k-t_k,k-1}\leq \frac{1}{4L}$, we have
\begin{align}\label{eq:200}
	\|\theta_k-\theta_{k-t_k}\|\leq 4L\alpha_{k-t_k,k-1}(\|\theta_k\|+1)\leq \|\theta_k\|+1.
\end{align}
Therefore, we have
\begin{align*}
	&\|\theta_{k-t_k}-\theta^*\|(\|\theta_{k-t_k}\|+1)\\
	\leq\; &(\|\theta_k-\theta_{k-t_k}\|+\|\theta_k-\theta^*\|)(\|\theta_k-\theta_{k-t_k}\|+\|\theta_k-\theta^*\|+\|\theta^*\|+1)\\
	\leq\;& (\|\theta_k\|+\|\theta_k-\theta^*\|+1)(\|\theta_k\|+\|\theta_k-\theta^*\|+\|\theta^*\|+2)\\
	\leq\; &(2\|\theta_k-\theta^*\|+\|\theta^*\|+1)(2\|\theta_k-\theta^*\|+2\|\theta^*\|+2)\\
	\leq \;&4(\|\theta_k-\theta^*\|+\|\theta^*\|+1)^2\\
	\leq \;&8[\|\theta_k-\theta^*\|^2+(\|\theta^*\|+1)^2].
\end{align*}
It follows that
\begin{align}\label{eq:T21}
	(T_2) \leq 16\alpha_kL(\mathbb{E}_k[\|\theta_k-\theta^*\|^2]+(\|\theta^*\|+1)^2).
\end{align}
We next bound the terms $(T_3)$ and $(T_4)$. Using Assumption \ref{as:Lipschitz}, we have
\begin{align}
	(T_3)+(T_4)
	\leq \;&2L\|\theta_{k-t_k}-\theta^*\|\mathbb{E}_k[\|\theta_k-\theta_{k-t_k}\|]\nonumber\\
	\leq\;& 8L^2\alpha_{k-t_k,k-1}\mathbb{E}_k[\|\theta_{k-t_k}-\theta^*\|(\|\theta_{k}\|+1)]\tag{Lemma \ref{le_2}}\\
	\leq\;& 8L^2\alpha_{k-t_k,k-1}\mathbb{E}_k[(\|\theta_k-\theta_{k-t_k}\|+\|\theta_{k}-\theta^*\|)(\|\theta_k\|+1)]\nonumber\\
	\leq\;& 8L^2\alpha_{k-t_k,k-1}\mathbb{E}_k[(\|\theta_k\|+\|\theta_{k}-\theta^*\|+1)(\|\theta_k-\theta^*\|+\|\theta^*\|+1)]\tag{Eq. (\ref{eq:200})}\\
	\leq\;& 8L^2\alpha_{k-t_k,k-1}\mathbb{E}_k[(2\|\theta_{k}-\theta^*\|+\|\theta^*\|+1)(\|\theta_k-\theta^*\|+\|\theta^*\|+1)]\nonumber\\
	\leq\;& 16L^2\alpha_{k-t_k,k-1}\mathbb{E}_k[(\|\theta_k-\theta^*\|+\|\theta^*\|+1)^2]\nonumber\\
	\leq\;& 32L^2\alpha_{k-t_k,k-1}\mathbb{E}_k[\|\theta_k-\theta^*\|^2+(\|\theta^*\|+1)^2].\label{eq:T22}
\end{align}
Finally, combining the upper bounds we derived for the terms $\{(T_i)\}_{1\leq i\leq 4}$ in Eqs. (\ref{31}), (\ref{eq:T21}), and (\ref{eq:T22}), we have
\begin{align*}
	(c)=\;&2\alpha_k((T_1)+(T_2)+(T_3)+(T_4))\\
	\leq\;& 128L^2\alpha_k\alpha_{k-t_k,k-1}\left[\mathbb{E}_k[\|\theta_k-\theta^*\|^2]+(\|\theta^*\|+1)^2\right],
\end{align*}
where the last line follows from $L\geq 1$ and $\alpha_k\leq \alpha_{k-t_k,k-1}$.

\subsection{Proof of Lemma \ref{le:recursion}}\label{pf:le:recursion}
Substituting the upper bounds we obtained for the terms $(a)-(d)$ into Eq. (\ref{eq:expand}), we have
\begin{align*}
	\mathbb{E}_k[\|\theta_{k+1}-\theta^*\|^2]-\mathbb{E}_k[\|\theta_k-\theta^*\|^2]
	\leq\;&(-2c_0\alpha_k+128L^2\alpha_k\alpha_{k-t_k,k-1}+2L_1^2\alpha_k^2)\mathbb{E}_k[\|\theta_k-\theta^*\|^2]\\
	&+(128L^2\alpha_k\alpha_{k-t_k,k-1}+2L_1^2\alpha_k^2)(\|\theta^*\|+1)^2\\
	\leq\;&(-2c_0\alpha_k+130L^2\alpha_k\alpha_{k-t_k,k-1})\mathbb{E}_k[\|\theta_k-\theta^*\|^2]\\
	&+130L^2\alpha_k\alpha_{k-t_k,k-1}(\|\theta^*\|+1)^2.
\end{align*}
The result then follows by taking the total expectation on both sides of the previous inequality.}

\section{Proof of All Technical Results in Section \ref{sec:RL}}

\subsection{Proof of Proposition \ref{thm:apply-to-q-learning}}\label{pf:thm:apply-to-q-learning}
\begin{enumerate}[(1)]
	\item We first show that $\{X_k\}$ is irreducible and aperiodic. Consider two arbitrary states $x_1=(s_1,a_1,s_1')$ and $ x_2=(s_2,a_2,s_2')\in \mathcal{X}$. Since $\{S_k\}$ is irreducible, there exists $k>0$ such that $P^k(s_1',s_2)>0$. Hence we have $P^{k+1}(x_1,x_2)=P^k(s_1',s_2)\pi(a_2|s_2)P_{a_2}(s_2,s_2')>0$. It follows that $\{X_k\}$ is irreducible. 
	
	To show $\{X_k\}$ is aperiodic, assume for a contradiction that $\{X_k\}$ is periodic with period $T\geq 2$. Since $\{X_k\}$ is irreducible, every state in $\mathcal{X}$ has the same period. Therefore, for any $x=(s,a,s')\in \mathcal{X}$, we must have $P^k(x,x)=0$ for all $k$ not divisible by $T$. However, we have for any $k$ not divisible by $T$:
	\begin{align}
		P^k(s',s')&=\sum_{s \in \mathcal{S}}P^{k-1}(s',s)P(s,s')\nonumber\\
		&=\sum_{s \in \mathcal{S}}\sum_{a \in \mathcal{A}}P^{k-1}(s',s)\pi(a|s)P_a(s,s')\nonumber\\
		&=\sum_{s \in \mathcal{S}}\sum_{a \in \mathcal{A}}P^k((s,a,s'),(s,a,s'))\label{aperiodic}\\
		&=0.\label{aperiodic1}
	\end{align}
	To see (\ref{aperiodic}), since $\{S_k\}$ is a Markov chain, we have
	\begin{align*}
		P^k((s,a,s'),(s,a,s'))
		=\;&\mathbb{P}(s_k=s,a_k=a,s_{k+1}=s'|s_0=s,a_0=a,s_1=s')\\
		=\;&\mathbb{P}(s_k=s,a_k=a,s_{k+1}=s'|s_1=s')\\
		=\;&P^{k-1}(s',s)\pi(a|s)P_a(s,s').
	\end{align*}
	Therefore, Eq. (\ref{aperiodic1}) shows that the period of $s'$ is at least $T$, which is a contradiction. Hence the Markov chain $\{X_k\}$ must be aperiodic.
	
	Since $\{X_k\}$ is an irreducible and aperiodic Markov chain with a finite state-space $\mathcal{X}$, it admits a unique stationary distribution, which we have denoted by $\mu_X$. In view of the definition of $X_k$, it is clear that $\mu_X(s,a,s')=\mu_S(s)\pi(a|s)P_a(s,s')$ for all $(s,a,s')$. Therefore, for any $x\in\mathcal{X}$, we have by definition of the total variation distance that
		\begin{align*}
			\|P^{k+1}(x,\cdot)-\mu_X(\cdot)\|_{\text{TV}}
			=\;&\frac{1}{2}\sum_{\tilde{x}\in\mathcal{X}}\left|P^{k+1}(x,\tilde{x})-\mu_X(\tilde{x})\right|\\
			=\;&\frac{1}{2}\sum_{(\tilde{s},\tilde{a},\tilde{s}')\in\mathcal{X}}\left|P_\pi^k(s',\tilde{s})-\mu_S(\tilde{s})\right|\pi(\tilde{a}|\tilde{s})P_{\tilde{a}}(\tilde{s},\tilde{s}')\\
			=\;&\frac{1}{2}\sum_{(\tilde{s},\tilde{a},\tilde{s}')\in\mathcal{X}}\left|P_\pi^k(s',\tilde{s})-\mu_S(\tilde{s})\right|\\
			\leq \;&\|P^k(\tilde{s},\cdot)-\mu_S(\cdot)\|_{\text{TV}}\\
			\leq \;&C'\rho'^k
		\end{align*}
		for all $k\geq 0$. It follows that $\max_{x\in\mathcal{X}}\|P^{k+1}(x,\cdot)-\mu_X(\cdot)\|_{\text{TV}}\leq C'\rho'^k$ for all $k\geq 0$.
	
	\item Using Cauchy-Schwarz inequality, and our assumption that $\|\phi(s,a)\|=1$ for all state-action pairs, we have for any $\theta_1,\theta_2$ and $x=(s,a,s')$:
	\begin{align*}
		\|F(x,\theta_1)-F(x,\theta_2)\|
		= \;&\|\phi(s,a)(\mathcal{R}(s,a)+\gamma\max_{a_1 \in \mathcal{A}}\phi(s',a_1)^\top\theta_1-\phi(s,a)^\top\theta_1)\\
		&-\phi(s,a)(\mathcal{R}(s,a)+\gamma\max_{a_2 \in \mathcal{A}}\phi(s',a_2)^\top\theta_2-\phi(s,a)^\top\theta_2)\|\\
		\leq\; &\gamma\|\phi(s,a)(\max_{a_1 \in \mathcal{A}}\phi(s',a_1)^\top\theta_1-\max_{a_2 \in \mathcal{A}}\phi(s',a_2)^\top\theta_2)\|+\|\phi(s,a)\phi(s,a)^\top(\theta_1-\theta_2)\|\\
		\leq \;&\gamma|\max_{a_1 \in \mathcal{A}}\phi(s',a_1)^\top\theta_1-\max_{a_2 \in \mathcal{A}}\phi(s',a_2)^\top\theta_2|+\|\theta_1-\theta_2\|.
	\end{align*}
	Since
	\begin{align}
		|\max_{a_1 \in \mathcal{A}}\phi(s',a_1)^\top\theta_1-\max_{a_2 \in \mathcal{A}}\phi(s',a_1)^\top\theta_2|
		\leq \;&\max_{a' \in \mathcal{A}}|\phi(s',a')^\top(\theta_1-\theta_2)|\label{eq:70}\\
		\leq\;& \max_{a' \in \mathcal{A}}\|\phi(s',a')\|\|\theta_1-\theta_2\|\nonumber\\
		\leq\;& \|\theta_1-\theta_2\|\nonumber,
	\end{align}
	we have for any $\theta_1,\theta_2$ and $x$:
	\begin{align*}
		\|F(x,\theta_1)-F(x,\theta_2)\|
		\leq(\gamma+1)\|\theta_1-\theta_2\|
		\leq M\|\theta_1-\theta_2\|.
	\end{align*}
	Moreover, we have 
	\begin{align*}
		\|F(x,0)\|=\|\phi(s,a)\mathcal{R}(s,a)\|\leq r_{\max}<M
	\end{align*}
	for any $x\in\mathcal{X}$.
	\item 
	Using the fact that $\bar{F}(\theta^*)=0$, we have
	\begin{align}
		(\theta-\theta^*)^\top (\bar{F}(\theta)-\bar{F}(\theta^*))
		=\;&\gamma (\theta-\theta^*)^\top \mathbb{E}_{\mu_S}[\phi(S,A)(\max_{a_1 \in \mathcal{A}}\phi(S',a_1)^\top\theta-\max_{a_2 \in \mathcal{A}}\phi(S',a_2)^\top\theta^*)]\nonumber\\
		&-\mathbb{E}_{\mu_S}[(\phi(S,A)^\top(\theta-\theta^*))^2]\nonumber\\
		\leq \;&\gamma\mathbb{E}_{\mu_S}[|\phi(S,A)^\top (\theta-\theta^*)|\max_{a' \in \mathcal{A}}|\phi(S',a')^\top(\theta-\theta^*)|]\nonumber\\
		&-\mathbb{E}_{\mu_S}[(\phi(S,A)^\top(\theta-\theta^*))^2]\label{eq:51}\\
		\leq \;&\gamma\sqrt{\mathbb{E}_{\mu_S}[(\phi(S,A)^\top (\theta-\theta^*))^2]}\sqrt{\mathbb{E}_{\mu_S}[\max_{a \in \mathcal{A}}(\phi(S,a)^\top(\theta-\theta^*))^2]}\nonumber\\
		&-\mathbb{E}_{\mu_S}[(\phi(S,A)^\top(\theta-\theta^*))^2].\label{eq:52}
	\end{align}
	Eq. (\ref{eq:51}) follows from Eq. (\ref{eq:70}). Eq. (\ref{eq:52}) follows from the fact that when $S\sim \mu_S$, we have $S'\sim\mu_S$. For simplicity of notation, denote 
	\begin{align*}
		A=\sqrt{\mathbb{E}_{\mu_S}[(\phi(S,A)^\top (\theta-\theta^*))^2]},\text{ and }B=\sqrt{\mathbb{E}_{\mu_S}[\max_{a \in \mathcal{A}}(\phi(S,a)^\top(\theta-\theta^*))^2]}.
	\end{align*} 
	Since Assumption \ref{as:q-behhavior-policy} gives $\gamma^2B^2-A^2\leq -\kappa \|\theta-\theta^*\|^2$, we have
	\begin{align*}
		(\theta-\theta^*)^\top \bar{F}(\theta)
		\leq\frac{\gamma^2 B^2-A^2}{\gamma B/A+1}\leq -\frac{\kappa}{2}\|\theta-\theta^*\|^2.
	\end{align*}
\end{enumerate}

\subsection{Proof of Proposition \ref{thm:1d-iff}}\label{subsec:pf:thm:1d-iff}

We first show that Eq. (\ref{1d-weaker}) implies $h^+<0$, and $h^-<0$. Note that Jensen's inequality implies
\begin{align}
	\mathbb{E}_{\mu_S}[\max_{a' \in \mathcal{A}}\phi(S,a')^2]
	=\;&\mathbb{E}_{\mu_S}\left\{\max\left[(\max_{a' \in \mathcal{A}}\phi(S,a'))^2 , (\min_{a' \in \mathcal{A}}\phi(S,a'))^2\right]\right\}\nonumber\\
	\geq\;& \max\left\{\mathbb{E}_{\mu_S}[(\max_{a' \in \mathcal{A}}\phi(S,a'))^2], \mathbb{E}_{\mu_S}[(\min_{a' \in \mathcal{A}}\phi(S,a'))^2]\right\}.\label{weaker-to-H}
\end{align}
Thus, using Eq. (\ref{1d-weaker}), we have
\begin{align*}
	h^+
	=\;&\mathbb{E}_{\mu_S}[\gamma\phi(S,A)\max_{a' \in \mathcal{A}}\phi(S',a')]-\mathbb{E}_{\mu_S}[\phi(S,A)^2]\\
	=\;& \mathbb{E}_{\mu_S}[\gamma\phi(S,A)\max_{a' \in \mathcal{A}}\phi(S',a')]-\sqrt{\mathbb{E}_{\mu_S}[\phi(S,A)^2]\mathbb{E}_{\mu_S}[\phi(S,A)^2]}\\
	<\;& \mathbb{E}_{\mu_S}[\gamma\phi(S,A)\max_{a' \in \mathcal{A}}\phi(S',a')]-\gamma\sqrt{\mathbb{E}_{\mu_S}[\max_{a' \in \mathcal{A}}\phi(S,a')^2]\mathbb{E}_{\mu_S}[\phi(S,A)^2]}\\
	\leq\;& 0,
\end{align*}
where the last inequality follows from Cauchy-Schwarz inquality and the fact that $S'$ and $S$ are equal in distribution if $S\sim\mu_S$. Similarly, we also have $h^-<0$. 

We next prove the equivalence stated in Proposition \ref{thm:1d-iff}. By definition of $h^+$ and $h^-$, in uni-dimensional case, ODE (\ref{ODE-q-learning}) can be equivalently written as
\begin{align*}
	\dot{\theta}(t)=\begin{cases}
		h^+ \theta(t)+r_\pi,&\theta(t)\geq 0,\\
		h^- \theta(t)+r_\pi,&\theta(t)<0.
	\end{cases}
\end{align*}
In the case where $r_\pi=0$, it is easy to see that ODE (\ref{ODE-q-learning}) is globally asymptotically stable if and only if $h^+,h^-<0$. Now we assume without loss of generality that $r_\pi>0$. The proof for the other case is entirely similar.

\textbf{Sufficiency:} We first note that $\theta^*=-r_\pi/h^+>0$. Let $W(\theta)=\frac{1}{2}(\theta-\theta^*)^2$ be a candidate Lyapunov function. It is clear that $W(\theta)\geq 0$ for all $\theta\in \mathbb{R}$, and $W(\theta)=0$ if and only if $\theta=\theta^*$. Moreover, we have \begin{align*}
	\dot{W}(\theta(t))&=(\theta(t)-\theta^*)\dot{\theta}(t)\\
	&=\begin{cases}
		h^+(\theta(t)-\theta^*)^2,&\theta(t)\geq 0\\
		(\theta(t)-\theta^*)(h^-\theta(t)-h^+\theta^*),&\theta(t)<0.
	\end{cases}
\end{align*}
It is clear that $\dot{W}(\theta(t))<0$ when $\theta(t)\in [0,\theta^*)\cup(\theta^*,\infty)$. For $\theta(t)<0$, since $\theta(t)-\theta^*<0$, $h^+\theta^*=-r_\pi<0$, and $h^-\theta(t)\geq 0$, we must also have $\dot{W}(\theta(t))<0$. Therefore, the time derivative of the Lyapunov function $W(\theta)$ along the trajectory of ODE (\ref{ODE-q-learning}) is strictly negative when $\theta(t)\neq \theta^*$. It then follows from the Lyapunov stability theorem \cite{haddad2011nonlinear,khalil2002nonlinear} that $\theta^*$ is globally asymptotically stable.

\textbf{Necessity:} We prove by contradiction. Suppose that the equilibrium point $\theta^*$ is globally asymptotically stable, but $h^+\geq 0$ or $h^->0$. Suppose that $h^+\geq 0$. When $\theta(0)>\max(0,\theta^*)$, we have $\dot{\theta}(t)=h^+\theta(t)+r_{\pi}\geq r_{\pi}>0$. It follows that $\theta(t)>\theta(0)>\theta^*$ for all $t\geq 0$, which contradict to the fact that $\theta^*$ is a globally asymptotically stable equilibrium point. Suppose that $h^->0$. When $\theta(0)<\min(\theta^*,-(1+r_{\pi})/h^-)$, we have $\dot{\theta}(t)=h^-\theta(t)+r_\pi\leq -1<0$. It follows that $\theta(t)<\theta(0)<\theta^*$ for all $t\geq 0$, which also contradict to the fact $\theta^*$ being globally asymptotically stable.

\subsection{Proof of Proposition \ref{thm:full-dimension}}\label{pf:thm:full-dimension}
When $d=mn$, the feature matrix $\Phi$ is a square matrix. Define 
\begin{align*}
	\Theta_{s,a}=\text{span}\left(\left\{\phi(s',a')| (s',a')\in\mathcal{S}\times\mathcal{A},\;(s',a')\neq (s,a)\right\}\right)^{\perp}.
\end{align*}
Note that $\Theta_{s,a}$ exists for all state-action pairs since $\Phi$ is full rank. Now for a given state-action pair $(s,a)$, let $\theta\neq 0$ be in $\Theta_{s,a}$, Eq. (\ref{con}) implies \begin{align*}
	\gamma^2\mu_S(s)(\phi(s,a)^\top\theta)^2<\mu_S(s)\pi(a|s)(\phi(s,a)^\top\theta)^2,
\end{align*}
which further gives $\gamma^2<\pi(a|s)$. Therefore, by running $(s,a)$ though all state-action pairs, we have $\gamma^2<\min_{(s,a)\in \mathcal{S}\times\mathcal{A}}\pi(a|s)\leq \frac{1}{m}$. Thus, if $\gamma^2\geq 1/m$, there is no behavior policy $\pi$ that satisfies condition (\ref{con}).

\subsection{Computing $\omega(\pi)$}\label{computing_omega}
We here present one way to compute $\omega(\pi)$ for an MDP with a chosen policy $\pi$ when the underlying model is known. Before that, the following definitions are needed.
\begin{definition}
	Let $D\in \mathbb{R}^{mn\times mn}$ be a diagonal matrix with diagonal entries $\{\mu_S(s)\pi(a|s)\}_{(s,a)\in \mathcal{S}\times\mathcal{A}}$, and let $\Sigma=\Phi^\top D\Phi\in \mathbb{R}^{d\times d}$, where $\Phi\in \mathbb{R}^{mn\times d}$ is the feature matrix.
\end{definition}
\begin{definition}
	Let $\mathcal{B}=\mathcal{A}^n\subseteq \mathbb{R}^n$ be the set of all deterministic policies.
\end{definition}
\begin{definition}
	Let $H\in \mathbb{R}^{n\times n}$ be a diagonal matrix with diagonal entries $\{\mu_S(s)\}_{ s \in \mathcal{S}}$, and let $\Sigma_{b}=\Phi_b^\top H\Phi_b\in\mathbb{R}^{d\times d}$, where $\Phi_b\in \mathbb{R}^{n\times d}$ ($b\in \mathcal{B}$) is defined by:
	\begin{align*}
		\Phi_b =\left[\begin{array}{ccc}
			\mbox{---}   & \phi(s_{1},b)^\top & \mbox{---}\\
			...    & ... & ...\\
			\mbox{---}   & \phi(s_{n},b)^\top & \mbox{---}
		\end{array}\right].
	\end{align*}
\end{definition}
We now compute $\omega(\pi)$ given in the following lemma. Let $\lambda_{\max}(\cdot)$ return the largest eigenvalue of a positive semi-definite matrix
\begin{lemma}\label{numerical}
	$\omega(\pi)=\min_{b\in \mathcal{B}}\left[1/\lambda_{\max}(\Sigma^{-1/2}\Sigma_{b}\Sigma^{-1/2})\right]$.
\end{lemma}
\textit{Proof of Lemma \ref{numerical}}:
Recall our definition for $\omega(\pi)$:
\begin{align}\label{key3}
	\omega(\pi)=\min_{\theta\neq 0}\frac{\sum_{s \in \mathcal{S}}\mu_S(s)\sum_{a \in \mathcal{A}}\pi(a|s)(\phi(s,a)^\top\theta)^2}{\sum_{s \in \mathcal{S}}\mu_S(s)\max_{a \in \mathcal{A}}(\phi(s,a)^\top\theta)^2}\cdot
\end{align}
Let $f(\theta)$ be the numerator. Then we have
\begin{align*}
	f(\theta)&=\sum_{s \in \mathcal{S}}\mu_S(s)\sum_{a \in \mathcal{A}}\pi(a|s)(\phi(s,a)^\top\theta)^2\\
	&=\theta^\top \Phi^\top D\Phi\theta=\theta^\top \Sigma\theta.
\end{align*}
Since the diagonal entries of $D$ are all positive, and $\Phi$ is full column rank, the matrix $\Sigma$ is symmetric and positive definite. To represent the denominator of (\ref{key3}) in a similar form, let
\begin{align*}
	g(\theta,b)&=\sum_{s}\mu_S(s)(\phi(s,b)^\top\theta)^2\\
	&=\theta^\top \Phi^\top_bH\Phi_b\theta=\theta^\top\Sigma_{b}\theta,
\end{align*}
where $b\in\mathcal{B}$.
Since the columns of $\Phi_b$ can be dependent, the matrix $\Sigma_{b}$ is in general only symmetric and positive semi-definite. Using the definition of $f(\theta)$ and $g(\theta,b)$, we can rewrite $\omega(\pi)$ as
\begin{align*}
	\omega(\pi)&=\min_{ \theta\neq 0}\frac{f(\theta)}{\max_{b\in \mathcal{B}}g(\theta,b)}\\
	&=\min_{ \theta\neq 0}\min_{b\in \mathcal{B}}\frac{f(\theta)}{g(\theta,b)}\\
	&=\min_{b\in \mathcal{B}}\min_{ \theta\neq 0}\frac{f(\theta)}{g(\theta,b)}.
\end{align*}
Now since $\Sigma$ is positive definite, $\Sigma^{1/2}$ and $\Sigma^{-1/2}$ are both well-defined and positive definite, we have
\begin{align*}
	\min_{ \theta\neq 0}\frac{f(\theta)}{g(\theta,b)}&=\left[\max_{ \theta\neq 0}\frac{g(\theta,b)}{f(\theta)}\right]^{-1}\\
	&=\left[\max_{\theta\neq 0}\frac{\theta^\top\Sigma_{\mu,b}\theta}{\theta^\top\Sigma_{\mu,\pi}\theta}\right]^{-1}\\
	&=\left[\left(\max_{x\neq 0}\frac{\|\Sigma_{\mu,b}^{1/2}\Sigma_{\mu,\pi}^{-1/2}x\|}{\|x\|}\right)^2\right]^{-1}\\
	&=\frac{1}{\lambda_{\max}(\Sigma_{\mu,\pi}^{-1/2}\Sigma_{\mu,b}\Sigma_{\mu,\pi}^{-1/2})}.
\end{align*}
It follows that 
\begin{align*}
	\omega(\pi)=\min_{b\in\mathcal{B}}[1/\lambda_{\max}(\Sigma^{-1/2}\Sigma_{b}\Sigma^{-1/2})].
\end{align*}
\hfill $\square$

\section{On the Existence of Solution to Equation (\ref{eq:pbj})}\label{ap:no_solution}

In this section, we construct an example to show that Eq. (\ref{eq:pbj}) for $Q$-learning with linear function approximation may not admit a solution. Consider an MDP with states-space $\mathcal{S}=\{1,2\}$, action-space $\mathcal{A}=\{1,2\}$, transition probability matrices
\begin{align*}
	P_1=\begin{bmatrix}
		1&0\\
		1&0
	\end{bmatrix}\quad P_2=\begin{bmatrix}
		0&1\\
		0&1
	\end{bmatrix},
\end{align*}
reward function
\begin{align*}
	R=\begin{bmatrix}
		R(1,1)\\
		R(1,2)\\
		R(2,1)\\
		R(2,2)
	\end{bmatrix}=\begin{bmatrix}
		1\\
		2\\
		2\\
		4
	\end{bmatrix},
\end{align*}
and a tunable discount factor $\gamma\in (0,1)$. Let the feature matrix be defined by
\begin{align*}
	\Phi=\begin{bmatrix}
		\phi(1,1)\\
		\phi(1,2)\\
		\phi(2,1)\\
		\phi(2,2)
	\end{bmatrix}=\begin{bmatrix}
		1\\
		2\\
		2\\
		4
	\end{bmatrix}.
\end{align*}
We use a uniform behavior policy, i.e., $\pi(1|1)=\pi(2|1)=\pi(1|2)=\pi(2|2)=0.5$. Then the transition probability matrix $P_\pi$ under policy $\pi$ is given by
\begin{align*}
	P_{\pi}=0.5P_1+0.5P_2=\begin{bmatrix}
		0.5&0.5\\
		0.5&0.5
	\end{bmatrix},
\end{align*}
and the unique stationary distribution $\mu_S$ of the Markov chain $\{S_k\}$ under policy $\pi$ is given by $\mu_S(1)=\mu_S(2)=0.5$. 

Consider the target equation (\ref{eq:pbj}). In this example, after straightforward calculation, Eq. (\ref{eq:pbj}) reduces to
\begin{align*}
	\theta=\begin{dcases}
		1+\gamma\frac{6}{5}\theta,&\theta\geq 0,\\
		1+\gamma \frac{3}{5} \theta,&\theta<0,
	\end{dcases}
\end{align*}
which has no solution when $\gamma\in (5/6,1)$.

\section{Numerical Simulations}\label{ap:larger}
To complement the numerical experiments presented in Section \ref{subsec:RL:experiments}, here we implement the 
$Q$-learning with linear function approximation algorithm on a larger MDP. 
We first introduce our experimental setup and then state our results.  

\subsection{Setup}
We consider an MDP with $100$ states and $10$ actions, where rewards and transition probabilities are generated as follows:

\textbf{Rewards. } The reward $\mathcal{R}(s,a)$ for each state-action pair $(s,a)$ is drawn uniformly on $[0,1]$.

\textbf{Transition probabilities. } For each state-action pair $(s,a) \in \mathcal{S} \times \mathcal{A}$, the probabilities $P_a(s,s')$ of each successor state $s' \in \mathcal{S}$ are chosen as random partitions of the unit interval. That is,  $99$ numbers are chosen uniformly randomly between $0$ and $1$, dividing that interval into $100$ numbers that sum to one -- the probabilities of the $100$ successor states.

Moreover, we consider a feature matrix $\Phi$ with $100$ features (recall that there are total $1000$ state-action pairs) for each state-action pair $(s,a) \in \mathcal{S} \times \mathcal{A}$, where each element is drawn from the Bernoulli distribution with success probability $p=0.5$. We repeat this process until we obtain a full column rank feature matrix $\Phi$. We further normalize the features to ensure $\Vert \phi(s,a) \Vert \leq 1$ for all $(s,a) \in \mathcal{S} \times \mathcal{A}$. Furthermore, the behavior policy $\pi$ is chosen to take each action with equal probability in each state $s \in \mathcal{S}$.

\subsection{Results}
In our first set of experiments, we choose constant stepsize $\alpha = 0.01$ and discount factor $\gamma \in \{0.5, 0.7, 0.9\}$. In Fig. \ref{fig:5}, we plot $\Vert \Phi \theta_k - Q^*\Vert$ as a function of the iteration $k$, where $Q^*$ associated with each $\gamma$ is the optimal $Q$ value function computed by the value iteration algorithm. Here, $\Phi \theta_k$ converges when $\gamma \in \{0.5, 0.7\}$, but diverges when $\gamma = 0.9$. This again shows that the algorithm is likely to diverge when $\gamma$ is close to $1$ and that the condition \eqref{key} is sufficient but not necessary for convergence. To demonstrate the exponential convergence rate for constant stepsize, we plot $\log \mathbb{E} \left[\Vert \theta_k - \theta^* \Vert^2\right]$ as the function of the iteration $k$ when $\gamma = 0.5$, where $\theta^*$ is the solution of the projected Bellman equation (\ref{eq:pbj}), estimated by the projected value iteration algorithm. Note that, we repeat running the algorithm for $1000$ times and use the average as an approximation to the expectation. In Fig. \ref{fig:6}, we observe that the graph is nearly a straight line when $k$ is large enough, meaning that $\theta_k\rightarrow\theta^*$ geometrically fast, which agrees with Theorem \ref{thm:q-constant} (1).

\begin{figure}[h]
	\centering
	\begin{minipage}{.40\textwidth}
		\centering
		\includegraphics[width=\linewidth]{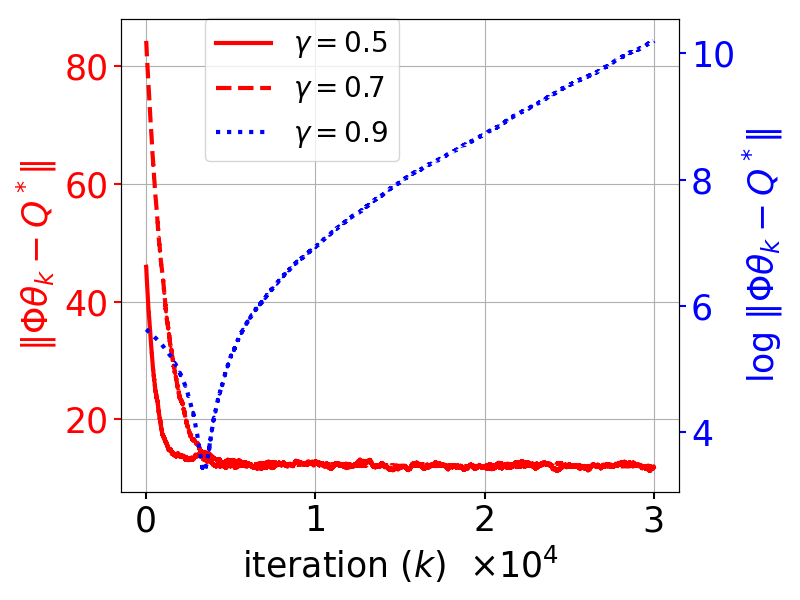}
		\caption{{\small Convergence of $Q$-learning with linear function approximation for discount factor $\gamma \in \{0.5, 0.7, 0.9\}$.}}
		\label{fig:5}
	\end{minipage}%
\hfill
	\begin{minipage}{.40\textwidth}
		\centering
		\includegraphics[width=\linewidth]{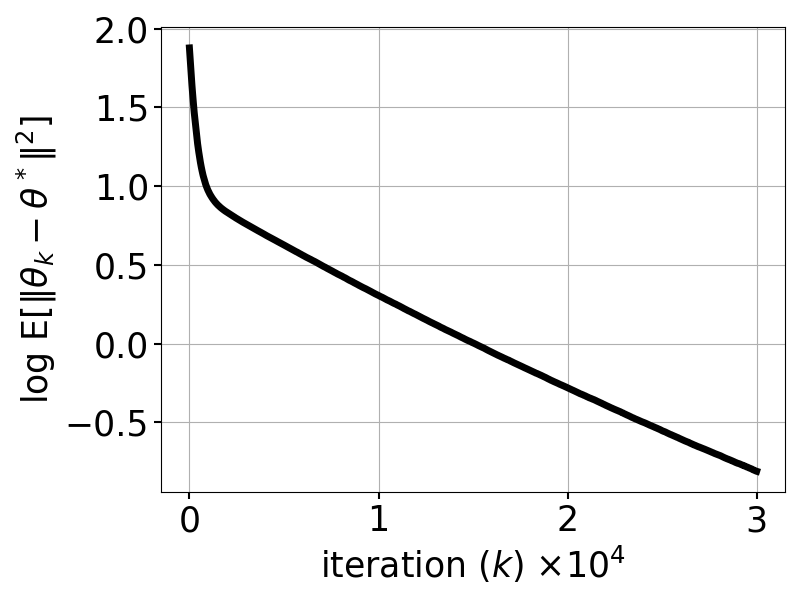}
		\caption{{\small Exponential convergence rate of $Q$-learning with linear function approximation for $\gamma=0.5$}}
		\label{fig:6}
	\end{minipage}
\end{figure}

In our second set of experiments, we consider diminishing stepsizes $\alpha_k = \frac{\alpha}{k^\xi}$, where $\xi \in \{0.4, 0.6, 0.8, 1.0\}$. In the case where $\xi = 1$, the constant $\alpha$ is chosen such that $\kappa \alpha > 2$ to achieve the optimal convergence rate. In addition, the discount factor $\gamma$ is set to be $0.5$. Fig. \ref{fig:7} shows that the algorithm converges for all $\xi \in \{0.4, 0.6, 0.8, 1.0\}$ and the algorithm converges faster with larger $\xi$. To further illustrate the rate of convergence for each choice of $\xi$, we plot $\log \mathbb{E} \left[\Vert \theta_k - \theta^* \Vert^2\right]$ as a function of $\log k$ in Fig. \ref{fig:8} and focus on its asymptotic behavior. We can observe that the slope is $-\xi$, which agrees with Corollary \ref{thm:diminishing_step_size}.

\begin{figure}[h]
	\centering
	\begin{minipage}{.40\textwidth}
		\centering
		\includegraphics[width=\linewidth]{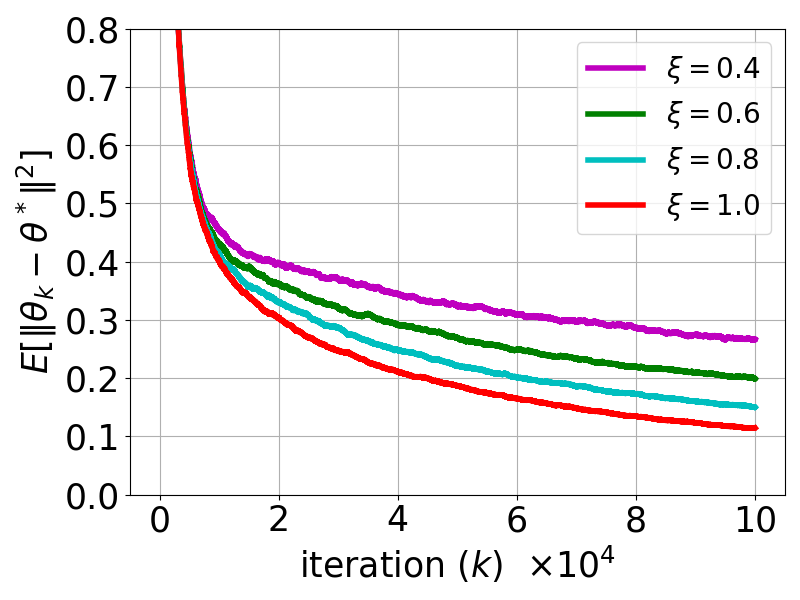}
		\caption{{\small Convergence for diminishing stepsizes}}
		\label{fig:7}
	\end{minipage}%
\hfill
	\begin{minipage}{.40\textwidth}
		\centering
		\includegraphics[width=\linewidth]{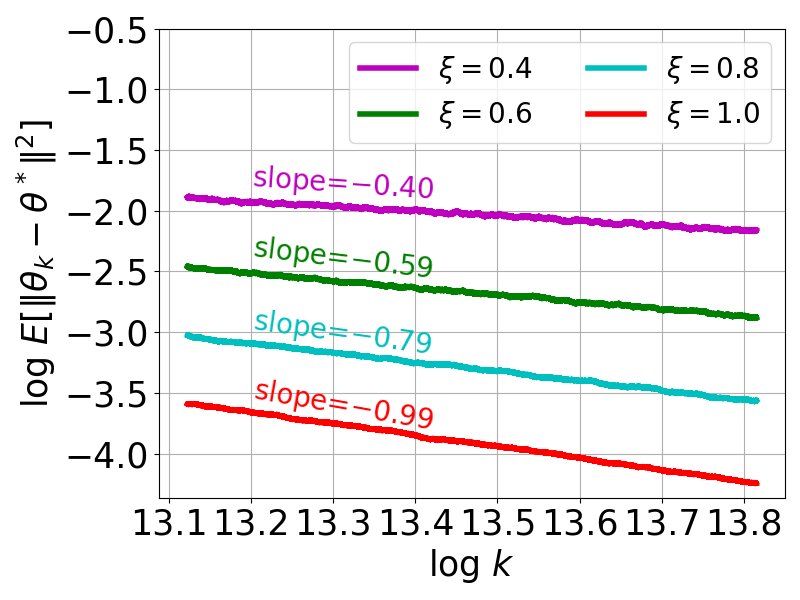}
		\caption{{\small Asymptotic convergence rate}}
		\label{fig:8}
	\end{minipage}
\end{figure}

\end{appendix}

\end{document}